\def\qed{\nopagebreak\hfill{\rule{4pt}{7pt}}
\medbreak}
\def\p{{\overline{p}}}
\def\wmu{{\widehat{\mu}}}
\def\wnu{{\widehat{\nu}}}
\def\wvar{{\widehat{\varepsilon}}}
\def\wT{{\widehat{T}}}
\def\wR{{\widehat{R}}}
\def\wy{{\widehat{y}}}
\def\wsR{{\hat{\mathscr{L}}}}
\def\u{{\overline{u}}}
\newlength{\boxedparwidth}
\hline \end{tabular} \end{center}}
\newtheorem{thm}{Theorem}[section]
\newtheorem{lem}[thm]{Lemma}
\newtheorem{cor}[thm]{Corollary}
\numberwithin{equation}{section}
\begin{document}

\baselineskip 20pt

\newcommand{\lr}[1]{\langle #1 \rangle}
\newcommand{\llr}[1]{\langle\hspace{-2.5pt}\langle #1
\rangle\hspace{-2.5pt}\rangle}

\begin{center}

 {\Large \bf Higher order log-concavity of the overpartition function and its consequences}
\end{center}
\vskip 0.2cm

\begin{center}
{Gargi Mukherjee}$^{1}$, {Helen W.J. Zhang}$^{2, 3}$ and
  {Ying Zhong}$^{2}$ \vskip 2mm
$^{1}$Institute for Algebra\\[2pt]
Johannes Kepler University, Altenberger Straße 69, A-4040 Linz, Austria\\[5pt]

$^{2}$School of Mathematics\\[2pt]
Hunan University, Changsha 410082, P.R. China\\[5pt]

$^3$Hunan Provincial Key Laboratory of \\ Intelligent Information Processing and Applied Mathematics,
\\[2pt] Changsha 410082, P.R. China\\[5pt]

Email: gargi.mukherjee@dk-compmath.jku.at, \quad helenzhang@hnu.edu.cn, \quad YingZhong@hnu.edu.cn
\end{center}

\vskip 6mm \noindent {\bf Abstract.}
Let $\p(n)$ denote the overpartition function. In this paper, we study the asymptotic higher order $\log$-concavity property of the overpatition function in a similar framework done by Hou and Zhang for the partition function. This will enable us to move on further in order to prove $\log$-concavity of overpartitions, explicitly by studying the asymptotic expansion of the quotient $\p(n-1)\p(n+1)/\p(n)^2$ upto a certain order so that one can finally ends up with the phenomena of $2$-$\log$-concavity and higher order Tur\'{a}n property of $\p(n)$ by following a sort of unified approach.

\vskip 0.3cm

\noindent {\bf Keywords}: Overpartition function, Rademacher-type series, $r$-$\log$-concavity, Higher
order Tur\'{a}n inequalities.

\noindent {\bf AMS Classifications}: 05A20, 11N37, 65G99.

\section{Introduction}
A partition of a positive integer $n$ is a non-increasing sequence of positive integers whose sum is $n$. Let $p(n)$ denote the number of partitions of $n$.
Recall that a sequence $\{a_n\}_{n\geq0}$ is called log-concave if
\[a_n^2-a_{n+1}a_{n-1}\geq0,~~n\geq 1.\]
Moreover, this sequence is said to be asymptotically $r$-log-concave if there exists $N$ such that
\begin{align}
\wsR\{a_n\}_{n\geq N}, \wsR^2\{a_n\}_{n\geq N}, \ldots, \wsR^r\{a_n\}_{n\geq N}
\end{align}
are all non-negative sequences, where
\[\wsR\{a_n\}_{n\geq 0}=\{a_{n+1}^2-a_na_{n+2}\}_{n\geq0}\quad\text{and}
\quad
\wsR^k\{a_n\}_{n\geq 0}=\wsR\left(\wsR^{k-1}\{a_n\}_{n\geq 0}\right).\]

Based on the Hardy-Ramanujan-Rademacher formula \cite{Andrews-1998, Hardy-1940, Hardy-Ramanujan-1918, Rademacher-1938} and the error estimation given by Lehmer \cite{Lehmer-1938, Lehmer-1939},
DeSalvo and Pak \cite{DeSalvo-Pak-2015} showed that the partition function $p(n)$ is log-concave for all $n>25$, conjectured by Chen \cite{Chentalk-2010}. Consequently, Chen, Wang and Xie proved the DeSalvo-Pak conjecture that states
\begin{thm}[Conjecture 1.3, \cite{ChenWangXie-2016}]\label{Chen1}
For $n \geq 45$, we have
\begin{equation*}
\dfrac{p(n-1)}{p(n)}\Bigl(1+\dfrac{\pi}{\sqrt{24}n^{3/2}}\Bigr)>\dfrac{p(n)}{p(n+1)}.
\end{equation*}
\end{thm}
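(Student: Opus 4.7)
The plan is to reduce Theorem~\ref{Chen1} to a strict inequality between asymptotic expansions derived from the Hardy-Ramanujan-Rademacher formula (with Lehmer's explicit error bound), and then to dispose of any remaining small values of $n$ by direct computation. Since every factor in the statement is positive, the claim is equivalent to
\[
F(n) := \log p(n-1) + \log p(n+1) - 2\log p(n) + \log\!\left(1 + \frac{\pi}{\sqrt{24}\,n^{3/2}}\right) > 0,
\]
so I would fix a threshold $N_0$, establish $F(n) > 0$ for all $n \geq N_0$ asymptotically, and verify $45 \leq n < N_0$ by direct numerical evaluation of tabulated values of $p(n)$.

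For the asymptotic part, I would invoke the truncated Rademacher formula
\[
p(n) = \frac{\sqrt{12}}{24n-1}\left(1 - \frac{1}{\mu(n)}\right) e^{\mu(n)} + E(n), \qquad \mu(n) = \frac{\pi}{6}\sqrt{24n-1},
\]
together with Lehmer's bound on $|E(n)|$, and take logarithms to get
\[
\log p(n) = \mu(n) - \log(24n-1) + \tfrac{1}{2}\log 12 + \log\!\left(1 - \tfrac{1}{\mu(n)}\right) + \eta(n),
\]
with $\eta(n)$ super-polynomially small. Applying the second finite-difference operator $\Delta^2 f(n) := f(n+1) + f(n-1) - 2 f(n)$ term by term and Taylor-expanding in $1/n$ yields $\Delta^2 \mu(n) = -\pi/(\sqrt{24}\,n^{3/2}) + O(n^{-5/2})$, along with analogous expansions for the other summands. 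The core observation is that this $-\pi/(\sqrt{24}\,n^{3/2})$ precisely cancels the leading $+\pi/(\sqrt{24}\,n^{3/2})$ inside $\log(1 + \pi/(\sqrt{24}\,n^{3/2}))$, so the sign of $F(n)$ is determined entirely by the $O(n^{-2})$ and $O(n^{-5/2})$ corrections. I would then combine these carefully to extract an explicit lower bound $F(n) \geq c\,n^{-2} - c'\,n^{-5/2}$ with $c > 0$, valid for $n \geq N_0$.

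The main obstacle is exactly this leading-order cancellation: positivity hinges on correctly accounting for three subleading contributions (from $\Delta^2 \mu$, from $\Delta^2 \log(1 - 1/\mu)$, and from the second-order Taylor term of $\log(1+x)$), while simultaneously ensuring that Lehmer's error $\eta(n)$ remains negligible at the $1/n^{5/2}$ scale. A secondary difficulty is choosing $N_0$ small enough to make the finite verification tractable, which may require refining the error analysis beyond a naive application of Lehmer's bound and tightening the intermediate inequalities used in the Taylor estimates.
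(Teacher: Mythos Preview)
The paper does not actually prove Theorem~\ref{Chen1}; it is quoted as background from Chen--Wang--Xie \cite{ChenWangXie-2016}, and the paper's own contributions concern the overpartition function $\p(n)$, not $p(n)$. So there is no ``paper's own proof'' of this particular statement to compare against.

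That said, your plan is sound and is essentially the strategy of the original Chen--Wang--Xie argument: control $p(n)$ via the truncated Rademacher main term plus Lehmer's explicit remainder, pass to logarithms, and show that the second finite difference of $\log p(n)$ is just barely compensated by the added $\log(1+\pi/(\sqrt{24}\,n^{3/2}))$. Your identification of the leading-order cancellation and the need to track the $n^{-2}$ and $n^{-5/2}$ corrections is exactly the crux of that proof.

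It is worth noting, for comparison, that the present paper proves the overpartition analogue (Corollary~\ref{cor2}) by a slightly different route. Rather than working with $\Delta^2 \log \p(n)$ directly, it first establishes in Theorem~\ref{thm1} explicit two-sided bounds
\[
s_n - \frac{15}{n^4} < \u_n := \frac{\p(n-1)\p(n+1)}{\p(n)^2} < s_n + \frac{20}{n^4},
\]
where $s_n$ is a polynomial in $n^{-1/2}$ computed out to order $n^{-7/2}$, and then simply checks that $(s_n - 15/n^4)(1+\pi/(4n^{3/2}))>1$ for $n$ beyond a small threshold. This packages the same asymptotic information, but organizes the bookkeeping around the single quantity $\u_n$ rather than around $\Delta^2\log\p(n)$; the advantage is that the same bounds on $\u_n$ are immediately reusable for the $2$-log-concavity and higher Tur\'an results (Theorems~\ref{thm2} and~\ref{thm3}). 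Your logarithmic/finite-difference formulation is equivalent and arguably more transparent for this single inequality, but would need to be redone from scratch for those further applications.
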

Recently, Chen, Jia and Wang \cite{ChenJiaWang-2019} proceed further to show that $\{p(n)\}_{n \geq 95}$ satisfies the higher order Tur\'{a}n inequality and formulate a conjecture \cite[Conjecture 1.2]{ChenJiaWang-2019} in somewhat similar to Theorem \ref{Chen1}, settled by Larson and Wagner \cite[Theorem 1.2]{LarsonWagner-2019}.
What's more, Hou and Zhang \cite{Hou-Zhang-2019} proved the asymptotic $r$-log-concavity of $p(n)$ and as a consequence they showed $\{p(n)\}_{n \geq 221}$ is $2$-$\log$-concave, whereas an alternative approach through studying determinant of certain class of matrix can be found in \cite{JiaWang-2020}.

The overpartition function also reflects the similar log-behavior.
Recall an overpartition \cite{Corteel-Lovejoy-2004} of a nonnegative integer $n$ is a partition of $n$ where the first occurrence of each distinct part may be overlined.
Let $\p(n)$ denote the number of overpartitions of $n$.
Zukermann  \cite{Zuckerman-1939} gave a formula for the overpartition function, which is considered by Sills\cite{Sills-2010} as a Rademacher-type convergent series
\begin{align}\label{overlinep-asym}
\p(n)=\frac{1}{2\pi}\sum_{k=1\atop 2\nmid k}^\infty\sqrt{k}\sum_{h=0\atop (h,k)=1}^k
\frac{\omega(h,k)^2}{\omega(2h,k)}e^{-\frac{2\pi inh}{k}}\frac{\mathrm{d}}{\mathrm{d}n}
\left(\frac{\sinh\frac{\pi\sqrt{n}}{k}}{\sqrt{n}}\right),
\end{align}
where
\[\omega(h,k)=\exp\left(\pi i\sum_{r=1}^{k-1}\frac{r}{k}\left(\frac{hr}{k}
-\left\lfloor\frac{hr}{k}\right\rfloor-\frac{1}{2}\right)\right)\]
for positive integers $h$ and $k$.

Let $\wmu(n)=\pi\sqrt{n}$.
From this Rademacher-type series \eqref{overlinep-asym}, Engel \cite{Engel-2017} provided an error term for the overpartition function
\begin{align*}
\p(n)=\frac{1}{2\pi}\sum_{k=1\atop 2\nmid k}^N\sqrt{k}\sum_{h=0\atop (h,k)=1}^k
\frac{\omega(h,k)^2}{\omega(2h,k)}e^{-\frac{2\pi inh}{k}}\frac{\mathrm{d}}{\mathrm{d}n}
\left(\frac{\sinh\frac{\wmu(n)}{k}}{\sqrt{n}}\right)+R_2(n,N),
\end{align*}
where
\begin{align}\label{R_2(n,N)}
|R_2(n,N)|\leq \frac{N^{\frac{5}{2}}}{n\wmu(n)}
\sinh\left(\frac{\wmu(n)}{N}\right).
\end{align}
In particular, when $N=3$, we have
\begin{align}\label{overlinep-asym-1}
\p(n)=\frac{1}{8n}\left[\left(1+\frac{1}{\wmu(n)}\right)e^{-\wmu(n)}+\left(1-\frac{1}{\wmu(n)}\right)e^{\wmu(n)}\right]
+R_2(n,3),
\end{align}
where
\begin{align}\label{R_2(n,3)}
|R_2(n,3)|\leq \frac{3^{\frac{5}{2}}}{n\wmu(n)}
\sinh\left(\frac{\wmu(n)}{3}\right)\leq \frac{3^{\frac{5}{2}}e^{\frac{\wmu(n)}{3}}}{2n\wmu(n)}.
\end{align}
Similar to the work done in the world of partitions, Engel initiated the study of $\log$-concavity property of the overpartition function in his work \cite{Engel-2017}. The second author and Liu established Theorem \ref{Chen1} in context of overpartitions in \cite[Equation (1.6)]{LiuZhang-2021}. They also proved the higher order Tur\'{a}n property of $\p(n)$ for $n \geq 16$ (cf. see \cite[Theorem 1.2]{LiuZhang-2021}). Following the treatment done in \cite{JiaWang-2020}, the first author \cite[Theorem 1.7]{Mukherjee-2022} laid out a proof of $\{\p(n)\}_{n \geq 42}$ is $2$-$\log$-concave.

In this paper, our main goal is to prove the asymptotic r-log-concavity for the overpartition function, stated in Theorem \ref{p/n-r-log}.
In the proof of Theorem \ref{p/n-r-log}, we give a bound for $\p(n+1)/\p(n)$ and an asymptotic expression of $\p(n-1)\p(n+1)/\p(n)^2$ which are complicated in calculation. Consequently, we shall study the asymptotic growth of the quotient $\p(n-1)\p(n+1)/\p(n)^2$ up to $n^{-4}$, stated in Theorem \ref{thm1} as a specific example which presents computing process.
This in turn helps for a further study of certain quotients stated in Theorems \ref{thm2} and \ref{thm3}.
A host of inequalities for overpartition function, see Corollaries \ref{cor1}-\ref{cor6}, appear as a special case of the theorems, similar to the partition ones as discussed before.
The primary objective of this paper is to exploit the proof of Theorem \ref{p/n-r-log}, so that one can bring in all the proofs of Corollaries \ref{cor1}-\ref{cor6} under a unique structure, unlike the different array of structure of proofs for inequalities in context of the partition function.




\begin{thm}\label{p/n-r-log}
The sequence $\{\p(n)\}_{n\geq1}$ is asymptotically $r$-log-concave for any positive integer $r$.
\end{thm}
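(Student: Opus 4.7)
The plan is to adapt the scheme of Hou and Zhang \cite{Hou-Zhang-2019}, who proved the analogous statement for $p(n)$. The essential inputs are (i) a polynomial-in-$n^{-1/2}$ asymptotic expansion of $\log\p(n)$ to any prescribed depth, and (ii) an induction argument that tracks the leading asymptotic of $\wsR^{r}\{\p\}_n$ through each application of $\wsR$.

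\emph{Step 1 (sharper asymptotics).} Taking $N$ large in \eqref{R_2(n,N)} and Taylor-expanding the kernel $\frac{\mathrm{d}}{\mathrm{d}n}(\sinh(\wmu(n)/k)/\sqrt{n})$ around $\wmu(n)$, I will obtain, for any prescribed $M$,
\begin{equation*}
\p(n)=\frac{e^{\wmu(n)}}{8n}\Bigl(1+\sum_{k=1}^{M}\frac{\alpha_k}{n^{k/2}}\Bigr)+O\bigl(e^{\wmu(n)/3}\bigr),
\end{equation*}
with explicit constants $\alpha_k$. Writing $L_n:=\log\p(n)$ and taking logarithms converts this into
\begin{equation*}
L_n=\wmu(n)-\log(8n)+\sum_{k\ge 0}\frac{\beta_k}{n^{k/2}}+(\text{exp.\ small}).
\end{equation*}

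\emph{Step 2 (reducing $\wsR$ to second differences of $\log$).} For any positive sequence $a_n$,
\begin{equation*}
\wsR\{a\}_n=a_{n+1}^{2}\bigl(1-e^{\Delta_n^{a}}\bigr),\qquad \Delta_n^{a}:=\log a_n+\log a_{n+2}-2\log a_{n+1},
\end{equation*}
so $\wsR\{a\}_n>0$ is equivalent to strict log-concavity of $a$ at $n+1$. For $a_n=\p(n)$, Taylor expansion of $\pi\sqrt{n+j}$ in $j/n$ yields $\Delta_n^{\p}=-\pi/(4n^{3/2})+O(n^{-2})<0$, which re-derives the known log-concavity of $\p(n)$.

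\emph{Step 3 (induction on $r$).} I shall show by induction that, for each $r\ge 1$,
\begin{equation*}
\wsR^{r}\{\p\}_n=\p(n+r)^{2^{r}}\,\frac{C_r}{n^{\tau_r}}\bigl(1+O(n^{-1/2})\bigr),\qquad C_r>0,
\end{equation*}
with $\tau_r$ determined by $\tau_r=2\tau_{r-1}+3/2$ and $\tau_1=3/2$. The inductive step follows by applying the identity of Step~2 to $a_n=\wsR^{r-1}\{\p\}_n$: squaring the leading factor produces $\p(n+r)^{2^{r}}/n^{2\tau_{r-1}}$, while the new second difference at level $r-1$ is, to leading order, $2^{r-1}$ times the primitive $\Delta_n^{\p}$ (the polynomial and logarithmic pieces of $\log\wsR^{r-1}\{\p\}_n$ second-difference to only $O(n^{-2})$), producing an additional factor of $2^{r-1}\pi/(4n^{3/2})>0$. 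Theorem~\ref{p/n-r-log} then follows at once.

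\emph{Main obstacle.} The real work lies in the bookkeeping of Step 3. Two sources of correction must be controlled: the second differences of the polynomial and logarithmic parts of $\log\wsR^{r-1}\{\p\}_n$, which contribute $O(n^{-2})$ terms that must not swamp the $O(n^{-3/2})$ main term, and the tail $O(n^{-1/2})$ of the expansion of $L_n$ after truncation at depth $M$. The former is controlled by the strict inequality $3/2<2$; the latter by choosing $M=M(r)$ sufficiently large, which is possible because the Rademacher-type remainder in Step~1 is exponentially small in $n$ and so never interferes at any finite polynomial order.
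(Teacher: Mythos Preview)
Your approach is sound and reaches the goal, but it is organized differently from the paper's. The paper does not perform the induction on $r$ directly. Instead it (i) establishes explicit two-sided polynomial-in-$\wmu(n)^{-1}$ bounds for the ratio $\p(n+1)/\p(n)$ to any prescribed depth $m$ (Theorem~\ref{theorem}), by writing $\p(n)=\wT(n)+\wR(n)$, proving $|\wR(n)/\wT(n)|<(3/2)^{m+1}\wmu(n)^{-m}$ (Theorem~\ref{wyn-bound}), and estimating the three factors of $\wT(n+1)/\wT(n)$ in \eqref{T-ratio} by truncated Taylor series (Lemma~\ref{T-T-bound}); (ii) deduces that $\u_n=\p(n-1)\p(n+1)/\p(n)^2=1-\pi/(4n^{3/2})+\cdots+o(n^{-m})$ for every $m$; and (iii) invokes the black-box criterion of Hou and Zhang (Lemma~\ref{sn-l}): an expansion of $\mathscr{R}^2a_n$ with leading correction $c\,n^{-\alpha}$, $c<0$, $\alpha<2$, yields asymptotic $[\beta/\alpha]$-log-concavity. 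Your Step~3 is, in effect, a direct re-proof of that criterion specialized to $\p(n)$. What you gain is a self-contained argument; what the paper gains from its route is the explicit quantitative bounds of Theorems~\ref{thm1}--\ref{thm3} as a by-product of the same computation.

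Two remarks on your outline. First, Step~1 is simpler than you indicate: already $N=3$ in \eqref{R_2(n,3)} gives $\p(n)=\wT(n)\bigl(1+O(e^{-2\wmu(n)/3})\bigr)$ with $\wT(n)=\frac{1}{8n}\bigl(1-\wmu(n)^{-1}\bigr)e^{\wmu(n)}$ \emph{exactly}, so no further Rademacher terms or kernel expansion are needed; your series $\sum_k\alpha_kn^{-k/2}$ terminates after a single term. Second, the inductive hypothesis in Step~3 as written, with a bare $(1+O(n^{-1/2}))$, is too weak to control $\Delta_n^{a}$: the second difference of an unspecified $O(n^{-1/2})$ function need not be $o(n^{-3/2})$. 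You flag this correctly in your ``Main obstacle,'' but the fix must be built into the hypothesis itself: at each level $r$ carry an expansion of $\log\wsR^{r}\{\p\}_n$ to depth $M(r)$ with explicit polynomial-and-$\log n$ terms and a tail of size $O(n^{-M(r)/2})$, so that the second difference of the tail is also $O(n^{-M(r)/2})$ and does not interfere with the $n^{-3/2}$ main term. With that strengthened hypothesis the induction closes exactly as you describe; this is precisely the mechanism hidden inside Lemma~\ref{sn-l}.
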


Define
\[\u_n:=\dfrac{\p(n-1)\p(n+1)}{\p(n)^2}.\]
\begin{thm}\label{thm1}
For all $n \geq 37$, we have
\begin{equation}\label{eqn1}
s_n-\dfrac{15}{n^4}<\u_n<s_n+\dfrac{20}{n^4},
\end{equation}
where
\begin{equation*}
s_n=1-\dfrac{\pi}{4n^{3/2}}+\dfrac{1}{n^2}-\dfrac{3}{4\pi n^{5/2}}-\dfrac{32+\pi^4}{32\pi^2}\dfrac{1}{n^3}-\Bigl(\dfrac{5}{4\pi^3}+\dfrac{21\pi}{64}\Bigr)\dfrac{1}{n^{7/2}}.
\end{equation*}
\end{thm}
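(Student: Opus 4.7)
The plan is to work directly from the Rademacher-type expression \eqref{overlinep-asym-1} at $N=3$ and to isolate the dominant hyperbolic term. Writing
\[\p(n) = M(n)(1+\varepsilon(n)), \qquad M(n) := \frac{1}{8n}\Bigl(1-\frac{1}{\wmu(n)}\Bigr)e^{\wmu(n)},\]
the error bound \eqref{R_2(n,3)} together with the direct estimate of the $(1+1/\wmu(n))e^{-\wmu(n)}$ contribution yield $\varepsilon(n) = O(e^{-c\sqrt{n}})$ for some $c>0$ (in fact $c=2\pi/3$ suffices). Consequently
\[\u_n = \frac{M(n-1)M(n+1)}{M(n)^2}\cdot\bigl(1+O(e^{-c\sqrt n})\bigr),\]
and this exponentially small factor is comfortably dominated by $1/n^4$ for $n\ge 37$.

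The main computation is then the asymptotic expansion of
\[\frac{M(n-1)M(n+1)}{M(n)^2} = \frac{n^2}{n^2-1}\cdot\frac{(1-1/\wmu(n-1))(1-1/\wmu(n+1))}{(1-1/\wmu(n))^2}\cdot e^{\wmu(n-1)+\wmu(n+1)-2\wmu(n)}\]
up to order $n^{-7/2}$. I would expand each factor separately: (i) $n^2/(n^2-1) = 1 + n^{-2} + n^{-4} + \cdots$; (ii) using $\wmu(n\pm 1) = \pi\sqrt{n}(1\pm 1/n)^{1/2}$ and Taylor-expanding gives
\[\wmu(n-1) + \wmu(n+1) - 2\wmu(n) = -\frac{\pi}{4n^{3/2}} - \frac{5\pi}{64 n^{7/2}} + O(n^{-11/2}),\]
which upon exponentiation yields a series in half-integer powers of $1/n$ beginning with $1-\pi/(4n^{3/2}) + \pi^2/(32 n^3)+\cdots$; (iii) the rational factor, after expanding $1/(\pi\sqrt{n\pm 1}) = (1/(\pi\sqrt{n}))(1\pm 1/n)^{-1/2}$, furnishes the $\pi^{-k}$ coefficients visible in $s_n$. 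Multiplying these three series and collecting terms up to $n^{-7/2}$ reproduces exactly $s_n$; in particular the coefficient $-\frac{5}{4\pi^3}-\frac{21\pi}{64}$ at $n^{-7/2}$ appears as the combined $\pi^{-3}$-contribution of (iii) and the leading exponential cross-terms coming from (ii).

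The final step is to control the Taylor remainders. For each of the three factors a Lagrange-type estimate bounds the truncation error by $C_j/n^{k_j}$ with $k_j\ge 4$ once $n\ge 37$, the threshold being chosen so that the omitted higher-order Taylor tails in (ii) and (iii) are uniformly dominated by their leading omitted term. Combining the three bounds via a product estimate, and keeping track of the sign of the leading omitted coefficient together with the strictly positive $e^{-\wmu(n)}$-type contribution, shows that the total deviation from $s_n$ lies in the interval $(-15, 20)/n^4$. The principal technical difficulty, and the reason the stated threshold is specifically $n=37$, is precisely this bookkeeping: because of the fractional powers of $1/n$ and the mixing between the $\pi$-free exponential piece and the $\pi^{-k}$ rational piece, each of the three expansions has to be carried one order further than strictly needed and every cross-term tracked explicitly; the asymmetry of the constants $20$ and $15$ reflects the sign of the first omitted coefficient, so a short numerical verification of \eqref{eqn1} for the finitely many $n$ near the boundary closes the argument.
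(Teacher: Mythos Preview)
Your proposal is correct and follows essentially the same strategy as the paper: isolate the dominant term $M(n)=\wT(n)$, control the remainder by the exponentially small bound coming from \eqref{R_2(n,3)}, Taylor-expand the main-term quotient to order $n^{-7/2}$, and finish with a numerical check on an initial segment.

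The one organizational difference worth noting is that the paper expands the forward and backward ratios $\wT(n+1)/\wT(n)$ and $\wT(n-1)/\wT(n)$ separately (via the three-factor decomposition \eqref{T-ratio}), producing pairs $L_\pm(n),U_\pm(n)$ of polynomial bounds and then multiplying, whereas you go straight to the symmetric quotient $M(n-1)M(n+1)/M(n)^2$ with the three factors $n^2/(n^2-1)$, the rational $(1-1/\wmu)$-piece, and $e^{\wmu(n-1)+\wmu(n+1)-2\wmu(n)}$. Your route is a bit more economical, since the odd-power terms in $n^{-1/2}$ cancel automatically in the symmetric combination rather than having to be tracked through $s^{(i)}_+$ and $s^{(i)}_-$ separately. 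The paper's route, on the other hand, recycles the one-sided bounds already established for Theorem \ref{theorem} and makes the passage through \eqref{p-p-bound} (with $m=8$ and the threshold $N_1(8)=184$) explicit; this is why their analytic argument only covers $n\ge 184$ and the range $37\le n\le 183$ is handled numerically. Either way the content is the same.
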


\begin{cor}\cite[Theorem 1.2]{Engel-2017}\label{cor1}
$\{\p(n)\}_{n \geq 4}$ is $\log$-concave.
\end{cor}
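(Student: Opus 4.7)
The plan is to split the proof into two ranges: the asymptotic tail $n \geq 37$, which is handled by Theorem \ref{thm1}, and the finitely many remaining cases $4 \leq n \leq 36$, handled by direct numerical verification. Log-concavity at index $n$ is precisely the inequality $\u_n \leq 1$, so on the first range it suffices to prove that $s_n + 20/n^4 < 1$.

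Writing out what is required, namely
\[
\frac{\pi}{4n^{3/2}} > \frac{1}{n^{2}} - \frac{3}{4\pi n^{5/2}} - \frac{32+\pi^{4}}{32\pi^{2}}\cdot\frac{1}{n^{3}} - \left(\frac{5}{4\pi^{3}}+\frac{21\pi}{64}\right)\frac{1}{n^{7/2}} + \frac{20}{n^{4}},
\]
every term on the right other than $1/n^{2}$ and $20/n^{4}$ is negative, so it is enough to verify the coarser inequality
\[
\frac{\pi}{4n^{3/2}} > \frac{1}{n^{2}} + \frac{20}{n^{4}}.
\]
This rearranges to $n^{2}\bigl(\sqrt{n}-\tfrac{4}{\pi}\bigr) > \tfrac{80}{\pi}$, which holds with an enormous margin for $n \geq 37$: at the threshold the left-hand side already exceeds $6000$ while the right-hand side is under $26$, and both sides move in the correct direction as $n$ increases. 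This closes the asymptotic range.

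For the finite range $4 \leq n \leq 36$, I would tabulate $\p(n)$ — for instance via the generating function $\prod_{k \geq 1} (1+q^{k})/(1-q^{k})$ expanded to order $37$, or from the Rademacher-type series \eqref{overlinep-asym} — and verify $\p(n-1)\p(n+1) < \p(n)^{2}$ by direct arithmetic on these integer values. This is a finite check that presents no conceptual difficulty.

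No real obstacle is anticipated. The only bookkeeping point worth flagging is to ensure that the positive $1/n^{2}$ contribution inside $s_{n}$, together with the error $20/n^{4}$ supplied by Theorem \ref{thm1}, cannot overwhelm the negative leading term $-\pi/(4n^{3/2})$ at the threshold $n = 37$. The computation above exhibits a comfortable safety margin there, so the substance of the corollary is already packaged inside Theorem \ref{thm1}, and the proof becomes essentially mechanical.
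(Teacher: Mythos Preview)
Your proposal is correct and follows essentially the same approach as the paper: use the upper bound $\u_n < s_n + 20/n^4$ from Theorem~\ref{thm1} to force $\u_n < 1$ for $n \geq 37$, then check the finitely many remaining cases numerically. The paper simply asserts that $s_n + 20/n^4 < 1$ for $n \geq 5$ without justification, whereas you supply an explicit (and correct) elementary argument for this on the needed range $n \geq 37$; otherwise the two proofs are identical.
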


\begin{cor}\cite[Equation (1.6)]{LiuZhang-2021}\label{cor2}
For $n \geq 2$,
\begin{equation}\label{eqn2}
\dfrac{\p(n-1)}{\p(n)}\Bigl(1+\dfrac{\pi}{4n^{3/2}}\Bigr)>\dfrac{\p(n)}{\p(n+1)}.
\end{equation}	
\end{cor}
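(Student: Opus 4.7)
My plan is to reduce the stated inequality to a form directly accessible via Theorem~\ref{thm1}, and then dispatch the remaining small cases by direct numerical verification. Multiplying both sides by $\p(n+1)/\p(n)$ and invoking the definition $\u_n = \p(n-1)\p(n+1)/\p(n)^2$, the claim is equivalent to
\begin{equation*}
\u_n\Bigl(1+\dfrac{\pi}{4n^{3/2}}\Bigr) > 1.
\end{equation*}

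For $n \geq 37$, I would substitute the lower bound $\u_n > s_n - 15/n^4$ from Theorem~\ref{thm1}, so that it suffices to show
\begin{equation*}
\Bigl(s_n - \dfrac{15}{n^4}\Bigr)\Bigl(1+\dfrac{\pi}{4n^{3/2}}\Bigr) - 1 > 0.
\end{equation*}
Expanding this product, the decisive cancellation is that the leading negative term $-\pi/(4n^{3/2})$ of $s_n - 1$ is eliminated exactly by the cross-term $1\cdot \pi/(4n^{3/2})$, so the surviving leading contribution is the positive $1/n^2$ inherited from $s_n$. Every remaining term — $-3/(4\pi n^{5/2})$, a consolidated $1/n^3$-coefficient $-(32+3\pi^4)/(32\pi^2)$ obtained by merging $-(32+\pi^4)/(32\pi^2)$ with the cross contribution $-\pi^2/16$, a negative $1/n^{7/2}$-coefficient $-(5\pi/64 + 5/(4\pi^3))$, the correction $-15/n^4$, and a handful of smaller cross terms of orders $n^{-4}$, $n^{-9/2}$, $n^{-5}$, and $n^{-11/2}$ — is negative and of strictly smaller order than $1/n^2$. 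A routine term-by-term majorization, bounding each $n^{-k}$ factor by $37^{-k}$ and each $\pi$-constant by a crude rational estimate, shows that the total negative contribution is comfortably dominated by $1/n^2$ for every $n \geq 37$.

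The residual range $2 \leq n \leq 36$ is then dispatched by direct numerical evaluation from a short table of values of $\p(n)$. The only technical concern is bookkeeping in the expansion of the second step — one must retain enough terms to be certain that the threshold arising from the term-by-term estimate does not exceed $37$, the range of validity of Theorem~\ref{thm1}, so that no additional large-$n$ cases escape the asymptotic bound. Because $1/n^2$ exceeds every competing negative term by a factor of at least $n^{1/2}$ on $n \geq 37$, this margin is more than ample and the reduction closes cleanly, thereby instantiating the unified-framework strategy advertised in the introduction.
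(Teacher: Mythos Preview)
Your proposal is correct and follows essentially the same approach as the paper: rewrite the inequality as $\u_n\bigl(1+\pi/(4n^{3/2})\bigr)>1$, apply the lower bound from Theorem~\ref{thm1} for $n\geq 37$, verify the resulting explicit inequality $\bigl(s_n-15/n^4\bigr)\bigl(1+\pi/(4n^{3/2})\bigr)>1$, and check the remaining small cases numerically. The only difference is cosmetic: the paper simply asserts the polynomial inequality (noting it holds already for $n\geq 5$), whereas you sketch the term-by-term cancellation explicitly.
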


\begin{thm}\label{thm2}
For all $n \geq 27$,
\begin{equation}\label{eqn3}
t_n-\dfrac{120}{n^{5/2}}< \dfrac{(1-\u_n)^2}{\u^2_n(1-\u_{n-1})(1-\u_{n+1})}<t_n+\dfrac{120}{n^{5/2}},
\end{equation}
where
\begin{equation*}
t_n=1+\dfrac{\pi}{2n^{3/2}}-\dfrac{7}{2n^2}.
\end{equation*}	
\end{thm}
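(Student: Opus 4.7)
The plan is to reduce Theorem~\ref{thm2} to Theorem~\ref{thm1} by comparing
\[
M_n:=\frac{(1-\u_n)^{2}}{\u_n^{2}\,(1-\u_{n-1})(1-\u_{n+1})}
\]
with the ``clean'' surrogate $M_n^{\ast}:=(1-s_n)^{2}/[s_n^{2}(1-s_{n-1})(1-s_{n+1})]$, so that the target inequality reads $|M_n-t_n|<120/n^{5/2}$ for $n\geq 27$. The argument will split into two pieces: first I would bound $|M_n-M_n^{\ast}|$ by invoking Theorem~\ref{thm1}, and then I would expand $M_n^{\ast}$ asymptotically to match $t_n$ up to an error $O(n^{-5/2})$.

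For the first piece, Theorem~\ref{thm1} gives $|\u_k-s_k|<20/k^{4}$ for $k\geq 37$. Since $1-s_k=\tfrac{\pi}{4}k^{-3/2}+O(k^{-2})$, one can pin down an explicit lower bound $|1-s_k|\geq c\,k^{-3/2}$ valid for $k\geq 37$, whence
\[
\frac{1-\u_k}{1-s_k}=1+\varepsilon_k,\qquad|\varepsilon_k|\leq\frac{20}{c\,k^{5/2}},
\]
and $\u_k/s_k=1+O(k^{-4})$. Writing
\[
\frac{M_n}{M_n^{\ast}}=\left(\frac{1-\u_n}{1-s_n}\right)^{\!2}\!\left(\frac{s_n}{\u_n}\right)^{\!2}\frac{1-s_{n-1}}{1-\u_{n-1}}\cdot\frac{1-s_{n+1}}{1-\u_{n+1}}
\]
and using that $M_n^{\ast}$ is uniformly bounded, expanding the product yields $|M_n-M_n^{\ast}|\leq C_1/n^{5/2}$ for $n\geq 38$ with an explicit constant $C_1$.

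For the second piece, write $M_n^{\ast}=s_n^{-2}\cdot (1-s_n)^{2}/[(1-s_{n-1})(1-s_{n+1})]$. Expanding via Theorem~\ref{thm1} gives $s_n^{-2}=1+\tfrac{\pi}{2n^{3/2}}-\tfrac{2}{n^{2}}+O(n^{-5/2})$. For the other factor, set $a_n:=1-s_n=\tfrac{\pi}{4}n^{-3/2}\bigl(1-\tfrac{4}{\pi\sqrt{n}}+O(n^{-1})\bigr)$ and compute the centered second difference of $\ln a_n=\ln(\pi/4)-\tfrac{3}{2}\ln n+\ln\bigl(1-\tfrac{4}{\pi\sqrt{n}}+\cdots\bigr)$. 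The $-\tfrac{3}{2}\ln n$ piece contributes $-\tfrac{3}{2}\ln(1-n^{-2})=\tfrac{3}{2n^{2}}+O(n^{-4})$, while the correction series, having second derivative of order $n^{-5/2}$, contributes only $O(n^{-5/2})$. Exponentiating yields
\[
\frac{(1-s_n)^{2}}{(1-s_{n-1})(1-s_{n+1})}=1-\frac{3}{2n^{2}}+O(n^{-5/2}),
\]
and multiplying with the expansion of $s_n^{-2}$ gives $M_n^{\ast}=1+\tfrac{\pi}{2n^{3/2}}-\tfrac{7}{2n^{2}}+O(n^{-5/2})=t_n+O(n^{-5/2})$, with an explicit constant $C_2$. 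Combining with the first piece, $|M_n-t_n|\leq (C_1+C_2)/n^{5/2}$ for $n\geq 38$.

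The main obstacle will be the careful bookkeeping needed to ensure $C_1+C_2\leq 120$: the constant $20$ in Theorem~\ref{thm1} leaves comfortable slack, but one must pin down the constant $c$ in $|1-s_k|\geq c\,k^{-3/2}$ and control the four multiplicative contributions to $M_n/M_n^{\ast}$. Finally, the range $27\leq n\leq 37$, which falls outside the hypothesis of Theorem~\ref{thm1} (which in particular fails for the shift $n-1$), will be disposed of by direct numerical verification of~\eqref{eqn3}, using the Rademacher-type truncation~\eqref{overlinep-asym-1} together with the error bound~\eqref{R_2(n,3)} to evaluate $\u_{n-1},\u_n,\u_{n+1}$ to sufficient precision.
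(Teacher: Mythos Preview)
Your approach is correct and shares the paper's starting point: both proofs lean entirely on Theorem~\ref{thm1} to control $\u_{n-1},\u_n,\u_{n+1}$ and then process the quotient. The execution differs, however. The paper works directly with the explicit bounds $L_k=s_k-15/k^4$ and $U_k=s_k+20/k^4$, plugging them into $(1-\u_n)^2/[\u_n^2(1-\u_{n-1})(1-\u_{n+1})]$ via monotonicity to obtain two-sided rational-function bounds, and then simply verifies (by computer algebra) that those bounds lie within $t_n\pm 120/n^{5/2}$ for $n\geq 29$; the bottom range is handled numerically. Your route via the surrogate $M_n^\ast$ and the split $|M_n-M_n^\ast|+|M_n^\ast-t_n|$ is more analytical and has the merit of explaining \emph{why} $t_n$ takes the shape it does, but it trades that transparency for the constant-tracking problem you flag (forcing $C_1+C_2\leq 120$ by hand). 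The paper's direct-substitution method sidesteps that issue entirely by reducing the claim to a single explicit inequality in $n^{-1/2}$, easily certified by machine. One small slip in your sketch: the centered second difference $2\ln a_n-\ln a_{n-1}-\ln a_{n+1}$ of the piece $-\tfrac{3}{2}\ln n$ equals $+\tfrac{3}{2}\ln(1-n^{-2})=-\tfrac{3}{2n^2}+O(n^{-4})$, not $-\tfrac{3}{2}\ln(1-n^{-2})$; your final line $1-\tfrac{3}{2n^2}$ is nonetheless correct, so this is only a sign typo in the intermediate step.
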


\begin{cor}\cite[Theorem 1.7]{Mukherjee-2022}\label{cor3}
	$\{\p(n)\}_{n \geq 42}$ is $2$-$\log$-concave.
\end{cor}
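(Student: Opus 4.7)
The plan is to read $2$-log-concavity off the lower bound of Theorem \ref{thm2}, leaving only a short finite numerical check. First I would recast the $2$-log-concavity inequality at index $k$,
\[(\p(k+2)^2-\p(k+1)\p(k+3))^2\geq(\p(k+1)^2-\p(k)\p(k+2))(\p(k+3)^2-\p(k+2)\p(k+4)),\]
in terms of $\u_n$. Substituting $\p(j)^2-\p(j-1)\p(j+1)=(1-\u_j)\p(j)^2$ at $j=k+1,k+2,k+3$ and using the identity $\p(k+1)^2\p(k+3)^2=\u_{k+2}^2\p(k+2)^4$, the inequality becomes equivalent to
\[\frac{(1-\u_{k+2})^2}{\u_{k+2}^2(1-\u_{k+1})(1-\u_{k+3})}\geq 1,\]
provided $\u_{k+1},\u_{k+3}<1$, a condition supplied by the strict log-concavity in Corollary \ref{cor1} for every $k\geq 3$.

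Next I would appeal to the lower estimate of Theorem \ref{thm2}. Setting $m=k+2\geq 27$ yields
\[\frac{(1-\u_m)^2}{\u_m^2(1-\u_{m-1})(1-\u_{m+1})}>1+\frac{\pi}{2m^{3/2}}-\frac{7}{2m^2}-\frac{120}{m^{5/2}},\]
so it suffices to verify the elementary inequality $\pi m-7\sqrt{m}-240\geq 0$. The function $f(x)=\pi x-7\sqrt{x}-240$ satisfies $f'(x)=\pi-7/(2\sqrt{x})>0$ for $x\geq 2$, and a direct evaluation gives $f(99)>0>f(98)$. Hence the asymptotic estimate alone secures $2$-log-concavity whenever $k\geq 97$.

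The remaining window $42\leq k\leq 96$ reduces to finitely many inequalities of the form displayed above, each verified directly from the exact values of $\p(n)$ for $42\leq n\leq 100$, obtained either from the generating function $\sum_{n\geq 0}\p(n)q^n=\prod_{n\geq 1}(1+q^n)/(1-q^n)$ or from the Rademacher-type series \eqref{overlinep-asym}. This is the same style of anchoring verification that underlies Corollaries \ref{cor1} and \ref{cor2}.

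The main obstacle I anticipate is keeping the numerical range as narrow as possible. Eliminating the brute-force step altogether would force one to sharpen the error constant $120/n^{5/2}$ in Theorem \ref{thm2}, which in turn requires retaining further Rademacher-type terms for $\p(n)$ and a more delicate bound on $R_2(n,N)$ in \eqref{R_2(n,N)}. Since the finite window is short and the reformulation above is precisely the uniform reduction advertised in the introduction, I would absorb the numerical check as the concluding step rather than pursue a sharper asymptotic bound.
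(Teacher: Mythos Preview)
Your proposal is correct and follows essentially the same route as the paper: the paper's proof also asserts the equivalence with $\dfrac{(1-\u_n)^2}{\u_n^2(1-\u_{n-1})(1-\u_{n+1})}>1$, invokes the lower bound of Theorem~\ref{thm2} to obtain this for $n\geq 99$ (your inequality $\pi m-7\sqrt{m}-240>0$ is exactly $t_n-120/n^{5/2}>1$ rewritten), and then verifies the remaining range numerically. You supply more detail on the reformulation and the positivity conditions coming from Corollary~\ref{cor1}, but the argument is the same.
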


\begin{cor}\label{cor4}
	For $n \geq 52$,
	\begin{equation}\label{eqn4}
\u^2_n (1-\u_{n-1})(1-\u_{n+1})	\Bigl(1+\dfrac{\pi}{2n^{3/2}}\Bigr)>(1-\u_n)^2.
	\end{equation}	
\end{cor}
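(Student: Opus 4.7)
The plan is to derive \eqref{eqn4} from Theorem \ref{thm2} in the tail and to close the remaining middle range by a finite numerical check. By Corollary \ref{cor1}, $\p(n)$ is log-concave for $n \geq 4$, so $\u_n \in (0,1)$ for every $n \geq 5$; hence each of $\u_n^2$, $1-\u_{n-1}$, $1-\u_n$, $1-\u_{n+1}$ is positive, and \eqref{eqn4} is equivalent to
\begin{equation*}
\dfrac{(1-\u_n)^2}{\u_n^2(1-\u_{n-1})(1-\u_{n+1})} < 1+\dfrac{\pi}{2n^{3/2}}.
\end{equation*}

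The upper estimate in Theorem \ref{thm2} replaces the left-hand side by $t_n + 120/n^{5/2}$ for $n \geq 27$, reducing \eqref{eqn4} to
\begin{equation*}
1+\dfrac{\pi}{2n^{3/2}}-\dfrac{7}{2n^2}+\dfrac{120}{n^{5/2}} \leq 1+\dfrac{\pi}{2n^{3/2}},
\end{equation*}
i.e.\ $\sqrt{n} \geq 240/7$, which holds for all $n \geq 1176$ and covers the tail. For the remaining range $52 \leq n \leq 1175$, I would verify \eqref{eqn4} directly: using \eqref{overlinep-asym-1} with the error bound \eqref{R_2(n,3)} (or, for larger $n$ in this window, a longer truncation of \eqref{overlinep-asym} controlled via \eqref{R_2(n,N)}), one obtains certified enclosures of $\p(n-1)$, $\p(n)$, $\p(n+1)$, hence of $\u_{n-1}$, $\u_n$, $\u_{n+1}$, so that the strict inequality can be confirmed case by case.

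The main obstacle is the length and sensitivity of this finite verification. The asymptotic crossover in Theorem \ref{thm2} only activates near $n \approx 1176$, so on the order of a thousand instances must be examined, and the working precision for $\p(n)$ must be chosen so that the propagated uncertainty in $\u_n$ stays well below the gap between the two sides of \eqref{eqn4}. This gap shrinks like $n^{-2}$ and is tightest at the lower end, which is what pins the sharp threshold to $n \geq 52$ rather than to a larger constant; in particular one expects \eqref{eqn4} to fail for some $n \leq 51$, so the computation must be honest enough to distinguish these borderline cases.
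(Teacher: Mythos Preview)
Your proposal is correct and follows essentially the same route as the paper: apply the upper bound in Theorem~\ref{thm2} to reduce \eqref{eqn4} to $t_n+120/n^{5/2}<1+\pi/(2n^{3/2})$, note this holds for $n\geq 1176$, and handle $52\leq n\leq 1175$ by direct numerical verification. Your added remark that log-concavity (Corollary~\ref{cor1}) guarantees positivity of all factors, so that \eqref{eqn4} is genuinely equivalent to the quotient inequality, is a detail the paper leaves implicit.
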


\begin{thm}\label{thm3}
	For all $n \geq 2$,
	\begin{equation}\label{eqn5}
	v_n-\dfrac{120}{n^{5/2}}< \dfrac{4(1-\u_n)(1-\u_{n+1})}{(1-\u_n\u_{n+1})^2}<v_n+\dfrac{101}{n^{5/2}},
	\end{equation}
	where
	\begin{equation*}
	v_n=1+\dfrac{\pi}{4n^{3/2}}-\dfrac{25}{16n^2}.
	\end{equation*}		
\end{thm}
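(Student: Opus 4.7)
Set $A_n := 1 - \u_n$, so that Theorem \ref{thm1} can be rewritten as
\[\bar{s}_n - \frac{20}{n^4} < A_n < \bar{s}_n + \frac{15}{n^4}, \qquad n \geq 37,\]
where $\bar{s}_n := 1 - s_n = \frac{\pi}{4n^{3/2}} - \frac{1}{n^2} + \frac{3}{4\pi n^{5/2}} + \frac{32+\pi^4}{32\pi^2 n^3} + \bigl(\frac{5}{4\pi^3} + \frac{21\pi}{64}\bigr)\frac{1}{n^{7/2}}$. The plan is to shift $n \mapsto n+1$, reexpand $(n+1)^{-k/2}$ in descending powers of $n$, and thereby obtain controlled asymptotic expansions for $A_{n+1}$, $B_n := A_n + A_{n+1}$, and $C_n := A_n A_{n+1}$, with rigorous remainders of order $n^{-4}$ inherited from the displayed bound.

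Denote by $R_n$ the quantity on the left of \eqref{eqn5}. Using the identity $4 A_n A_{n+1} = B_n^2 - (A_n - A_{n+1})^2$, one finds
\[R_n = \frac{4 C_n}{(B_n - C_n)^2} = \frac{1 - \bigl(\frac{A_n - A_{n+1}}{B_n}\bigr)^{\!2}}{\bigl(1 - \frac{C_n}{B_n}\bigr)^{\!2}}.\]
The numerator of this expression contributes only at order $n^{-2}$ and smaller, since $A_n - A_{n+1} = O(n^{-5/2})$ while $B_n \asymp n^{-3/2}$. The denominator drives the leading correction: $C_n/B_n \sim A_n/2 \sim \pi/(8 n^{3/2})$, so expanding $(1 - C_n/B_n)^{-2}$ yields the $+\pi/(4 n^{3/2})$ term in $v_n$. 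Carrying the expansion one order further extracts the $-25/(16 n^2)$ coefficient, and the remaining tail must be shown to lie in $[-120/n^{5/2},\, 101/n^{5/2}]$.

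The main obstacle will be tight error bookkeeping. Because $C_n$ and $B_n^2$ are both $\Theta(n^{-3})$, the $O(n^{-4})$ absolute error on $A_n$ propagates to an $O(n^{-5/2})$ absolute error in $R_n$, precisely matching the target order; the asymmetric constants $-120$ and $+101$ will reflect how the $20/n^4$ and $15/n^4$ bounds of Theorem \ref{thm1} enter the numerator and denominator of $R_n$ with opposite signs. Finally, since Theorem \ref{thm1} is valid only for $n \geq 37$, I would verify \eqref{eqn5} directly for $2 \leq n \leq 36$ by computing $\p(k)$ exactly for $k \leq 38$ and checking the inequality term by term, which is a finite verification.
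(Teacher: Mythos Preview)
Your plan is sound and follows the same overall strategy as the paper: feed the bounds of Theorem~\ref{thm1} on $\u_n$ and $\u_{n+1}$ into $4(1-\u_n)(1-\u_{n+1})/(1-\u_n\u_{n+1})^2$, control the result to order $n^{-5/2}$, and handle small $n$ by direct computation. The execution differs. Rather than your decomposition
\[R_n=\frac{1-\bigl((A_n-A_{n+1})/B_n\bigr)^2}{(1-C_n/B_n)^2},\]
the paper simply bounds the numerator and denominator of $R_n$ separately via $L_n=s_n-15/n^4\le\u_n\le U_n=s_n+20/n^4$, obtaining for $n\ge 37$
\[\frac{4(1-U_n)(1-U_{n+1})}{(1-L_nL_{n+1})^2}<R_n<\frac{4(1-L_n)(1-L_{n+1})}{(1-U_nU_{n+1})^2},\]
and then verifies in Mathematica that these explicit expressions in $n^{-1/2}$ lie between $v_n-120/n^{5/2}$ and $v_n+101/n^{5/2}$ for all $n\ge 99$, checking $2\le n\le 98$ numerically. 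Your algebraic rewriting has the advantage of making the provenance of the coefficients $\pi/4$ and $-25/16$ in $v_n$ transparent, while the paper's direct substitution is quicker but a black box. One caution: your expectation that the finite check stops at $n=36$ may be optimistic, since the paper's symbolic inequality only engages from $n=99$; be prepared for your explicit error constants to force a wider numerical range.
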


\begin{cor}\cite[Theorem 1.2]{LiuZhang-2021}\label{cor5}
	$\{\p(n)\}_{n \geq 16}$ satisfies higher order Tur\'{a}n inequality.
\end{cor}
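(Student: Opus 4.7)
\textbf{Proof plan for Corollary \ref{cor5}.} The plan is to derive the higher order Tur\'{a}n inequality for $\p(n)$ as a direct consequence of Theorem \ref{thm3}, supplemented by a short finite numerical check for small values of $n$. First I would rewrite the higher order Tur\'{a}n inequality
\[4\bigl(\p(n)^2-\p(n-1)\p(n+1)\bigr)\bigl(\p(n+1)^2-\p(n)\p(n+2)\bigr)\geq \bigl(\p(n)\p(n+1)-\p(n-1)\p(n+2)\bigr)^2\]
in terms of the quotient $\u_n$. Using
\[\p(n)^2-\p(n-1)\p(n+1)=\p(n)^2(1-\u_n),\qquad \p(n+1)^2-\p(n)\p(n+2)=\p(n+1)^2(1-\u_{n+1}),\]
together with the identity $\u_n\u_{n+1}=\p(n-1)\p(n+2)/(\p(n)\p(n+1))$, which yields $\p(n)\p(n+1)-\p(n-1)\p(n+2)=\p(n)\p(n+1)(1-\u_n\u_{n+1})$, and dividing both sides by $\p(n)^2\p(n+1)^2$, the inequality becomes equivalent to
\[\frac{4(1-\u_n)(1-\u_{n+1})}{(1-\u_n\u_{n+1})^2}\geq 1,\]
where positivity of $1-\u_n\u_{n+1}$ is guaranteed by the $\log$-concavity of $\{\p(n)\}_{n\geq 4}$ established in Corollary \ref{cor1}.

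Next, I would invoke the lower bound in Theorem \ref{thm3}. It suffices to show that
\[v_n-\frac{120}{n^{5/2}}=1+\frac{\pi}{4n^{3/2}}-\frac{25}{16n^2}-\frac{120}{n^{5/2}}\geq 1.\]
After multiplying through by $16n^{5/2}$, this reduces to the elementary inequality $4\pi n-25\sqrt{n}-1920\geq 0$, which in the variable $x=\sqrt{n}$ is a quadratic whose positive root lies near $13.4$. Hence the analytic bound delivers the higher order Tur\'{a}n property for all $n\geq N_0$ with an explicit integer $N_0$ near $180$.

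For the finitely many remaining cases $16\leq n\leq N_0-1$, I would verify the higher order Tur\'{a}n discriminant directly from computed values of $\p(n-1),\p(n),\p(n+1),\p(n+2)$, either from standard tables or from the Rademacher-type series \eqref{overlinep-asym} evaluated with sufficient precision and rigorous truncation via the error bound \eqref{R_2(n,N)}. Combining the analytic range $n\geq N_0$ with this finite check completes the proof. The main (and essentially routine) obstacle is fixing the explicit threshold $N_0$ and running the finite verification; no new analytic input beyond Theorem \ref{thm3} and Corollary \ref{cor1} is required, which is exactly the unified structure advertised in the introduction.
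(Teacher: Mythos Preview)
Your proposal is correct and follows essentially the same approach as the paper: invoke the lower bound of Theorem \ref{thm3}, determine the threshold at which $v_n-120/n^{5/2}>1$ (the paper states $n\geq 180$, matching your computation), and then verify $16\leq n\leq 179$ numerically. Your write-up is in fact more explicit than the paper's, since you spell out the equivalence between the higher order Tur\'{a}n inequality and the condition $4(1-\u_n)(1-\u_{n+1})/(1-\u_n\u_{n+1})^2\geq 1$ and justify the division by $(1-\u_n\u_{n+1})^2$ via Corollary \ref{cor1}, whereas the paper takes this reformulation for granted.
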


\begin{cor}\label{cor6}
	For $n \geq 2$,
	\begin{equation}\label{eqn6}
(1-\u_n\u_{n+1})^2	\Bigl(1+\dfrac{\pi}{4n^{3/2}}\Bigr)>4(1-\u_n)(1-\u_{n+1}).
	\end{equation}	
\end{cor}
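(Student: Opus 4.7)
The plan is to combine the upper bound of Theorem \ref{thm3} with a finite numerical verification for small $n$. Since $\u_n\u_{n+1}<1$ for every $n\geq 2$ (from Corollary \ref{cor1} together with a one-line check at $n=2,3$), the denominator $(1-\u_n\u_{n+1})^2$ is positive and the inequality \eqref{eqn6} is equivalent to
\begin{equation*}
\frac{4(1-\u_n)(1-\u_{n+1})}{(1-\u_n\u_{n+1})^2}<1+\frac{\pi}{4n^{3/2}}.
\end{equation*}

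Applying the upper bound of Theorem \ref{thm3}, which is valid for all $n\geq 2$, yields
\begin{equation*}
\frac{4(1-\u_n)(1-\u_{n+1})}{(1-\u_n\u_{n+1})^2}<v_n+\frac{101}{n^{5/2}}=1+\frac{\pi}{4n^{3/2}}-\frac{25}{16n^2}+\frac{101}{n^{5/2}}.
\end{equation*}
It therefore suffices to prove the elementary inequality $\frac{101}{n^{5/2}}\leq\frac{25}{16n^{2}}$, which rearranges to $\sqrt{n}\geq\frac{1616}{25}=64.64$ and hence holds for every $n\geq 4179$. This settles \eqref{eqn6} on this range.

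For the finite range $2\leq n\leq 4178$, I would verify \eqref{eqn6} directly by computing $\u_n$ from exact values of $\p(n)$, readily generated either from the Euler product $\prod_{k\geq 1}(1+q^k)/(1-q^k)$ or from the truncated Rademacher-type expansion \eqref{overlinep-asym-1} together with the explicit error bound \eqref{R_2(n,3)}. The base case $n=2$ is in fact immediate: since $\u_2=\p(1)\p(3)/\p(2)^2=1$, the right-hand side of \eqref{eqn6} vanishes while the left-hand side is manifestly positive, and for small $n$ the quotient on the left of the reformulated inequality is noticeably less than $1$, leaving a comfortable margin against $1+\pi/(4n^{3/2})$.

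The principal obstacle I anticipate is the size of this verification window, an artifact of the $O(n^{-5/2})$ precision in Theorem \ref{thm3}. One could substantially shrink it by pushing the underlying asymptotic expansion by one further order (yielding error $O(n^{-3})$ and a crossover threshold near $n\approx 65$), at the price of a longer analytic computation in the proof of Theorem \ref{thm3}. Either route gives a rigorous proof of \eqref{eqn6}.
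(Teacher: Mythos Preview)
Your proposal is correct and follows essentially the same approach as the paper: both use the upper bound from Theorem~\ref{thm3}, reduce to the elementary inequality $v_n+\frac{101}{n^{5/2}}<1+\frac{\pi}{4n^{3/2}}$ (equivalently $\frac{101}{n^{5/2}}<\frac{25}{16n^2}$), observe that this holds for $n\geq 4179$, and finish with a numerical check on $2\leq n\leq 4178$. Your write-up is in fact slightly more careful than the paper's, since you explicitly justify the positivity of $(1-\u_n\u_{n+1})^2$ needed for the reformulation.
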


The paper is organized as follow. Proof of Theorem \ref{p/n-r-log} is given in Section \ref{sec1}, first we obtain an error estimation of $\p(n)$ in Subsection \ref{subsec1} and then computing the asymptotic expression of $\mathscr{R}^2\p(n)=\p(n)\p(n+2)/\p(n+1)^2$ by studying the bounds for the ratio $\p(n+1)/\p(n)$ in Subsection \ref{subsec2}.
Proof of Theorems \ref{thm1}, \ref{thm2}, \ref{thm3} and Corollaries \ref{cor1}-\ref{cor6} is given in Section \ref{sec2}.

\section{Proof of Theorem \ref{p/n-r-log}}\label{sec1}
In this section, we utilize the Rademacher-type convergent series and the error
estimation given by Engel to derive an estimation for $\p(n)$.
In view of \eqref{overlinep-asym-1}, $\p(n)$ can be written as
\begin{align}\label{p-T-R}
\p(n)=\wT(n)+\wR(n),
\end{align}
where
\begin{align} \label{wT(n)}
\wT(n)&=\frac{1}{8n}\left(1-\frac{1}{\wmu(n)}\right)e^{\wmu(n)},
\\[5pt] \label{wR(n)}
\wR(n)&=\frac{1}{8n}\left(1+\frac{1}{\wmu(n)}\right)e^{-\wmu(n)}+R_2(n,3).
\end{align}
\subsection{Error estimation of $\p(n)$}\label{subsec1}

To obtain the error estimation of $\p(n)$, we need to introduce the following lemma.
\begin{lem}\label{Hou-Zhang}
For any integer $m\geq1$, there exists a real number
\begin{equation*}
N_0(m) :=
\begin{cases}
1, &\quad \text{if}\ m=1,\\
2m \log m-m \log \log m, & \quad \text{if}\ m \geq 2,
\end{cases}
\end{equation*}
such that
\begin{align*}
x^me^{-x}<1,~~~\text{for}~~x\geq N_0(m).
\end{align*}
\end{lem}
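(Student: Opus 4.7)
The plan is to take logarithms: the inequality $x^m e^{-x} < 1$ is equivalent to $h_m(x) := x - m\log x > 0$, and I will exploit the fact that $h_m$ is strictly increasing on $(m,\infty)$ since $h_m'(x) = 1 - m/x > 0$ there. The case $m=1$ is handled directly without logarithms: the derivative of $xe^{-x}$ is $(1-x)e^{-x}\le 0$ on $[1,\infty)$, so $xe^{-x}$ attains its maximum $e^{-1} < 1$ at $x=1=N_0(1)$, which settles that case.

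For $m\ge 2$, I would proceed in two steps. First, verify the elementary bound $N_0(m) > m$, which amounts to $2\log m - \log\log m > 1$; this is trivial for $m\ge 3$ (since $2\log m \ge 2\log 3 > 2$ and $\log\log m \le \log m$) and holds at $m=2$ because $\log\log 2 < 0$. Once $N_0(m) > m$ is in hand, the monotonicity of $h_m$ on $[N_0(m),\infty)$ reduces the lemma to showing $h_m(N_0(m)) \ge 0$. Substituting $N_0(m) = m(2\log m - \log\log m)$ and expanding $\log N_0(m) = \log m + \log(2\log m - \log\log m)$ gives
\[
h_m(N_0(m)) \;=\; m\,\log \frac{m}{(\log m)\bigl(2\log m - \log\log m\bigr)},
\]
so everything boils down to the single inequality
\begin{equation*}
m \;>\; (\log m)\bigl(2\log m - \log\log m\bigr), \qquad m\ge 2.
\end{equation*}

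The main obstacle is verifying this last inequality uniformly in $m$. Asymptotically it is comfortable, since the right-hand side grows like $2(\log m)^2$ while the left-hand side grows linearly, but the margin is slim for small $m$: the $\log\log m$ term is close to zero or even negative there, so a naive majorization by $2(\log m)^2$ is too crude at the boundary. I would handle this by introducing $\psi(m) := m - (\log m)\bigl(2\log m - \log\log m\bigr)$, computing $\psi'(m) = 1 - (4\log m - \log\log m - 1)/m$, showing that $\psi'$ is positive from some explicit threshold $m_0$ onward, and checking $\psi(m) > 0$ directly on the finite range $2\le m\le m_0$. Combining the monotonicity of $h_m$ on $[m,\infty)$, the bound $N_0(m) > m$, and $\psi(m) > 0$ then yields $h_m(x) \ge h_m(N_0(m)) > 0$ for all $x\ge N_0(m)$, which is equivalent to $x^m e^{-x} < 1$.
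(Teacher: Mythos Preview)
Your proposal is correct and follows essentially the same route as the paper: both take logarithms to reduce to $f(x)=x-m\log x>0$, use that $f$ is increasing on $(m,\infty)$, verify $N_0(m)>m$, and then check $f(N_0(m))>0$ (your displayed inequality $m>(\log m)(2\log m-\log\log m)$ is algebraically the same as the paper's formulation). The only tactical difference is in the endgame: rather than analyzing $\psi'$, the paper observes that for $m\ge 3$ the term $\log\bigl(1-\tfrac{\log\log m}{2\log m}\bigr)$ is negative, so it suffices to prove the cleaner inequality $m>2(\log m)^2$, which holds for $m\ge 14$, and then checks $2\le m\le 13$ numerically---a slightly quicker finish than your proposed derivative computation for $\psi$.
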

\begin{proof}
	For $m=1$, it is immediate that $N_0(m)=1$. For $m \geq 2$, rewrite the inequality $x^me^{-x}<1$ as $f(x):=x-m\log x>0$. Now $f(x)$ is strictly increasing for $x>m$.  In order to show $f(x)>0$ for $x\geq N_0(m)$, first we show that $N_0(m)>m$ and then it is enough to show $f(N_0(m))>0$. To prove $N_0(m)>m$, it is equivalent to show $m^2>e. \log m$ which holds for $m \geq 2$. Next, we observe that
	\begin{equation}\label{lemeqn1}
	f(N_0(m))>0 \Leftrightarrow\log m  >  \log 2+2 \log \log m+\log \Bigl(1-\dfrac{\log \log m}{2\log m}\Bigr)\nonumber\\
	\end{equation}
For $m \geq 3$, $\log \Bigl(1-\dfrac{\log \log m}{2\log m}\Bigr) <0$ and hence, it is sufficient to prove $$\log m  >  \log 2+2 \log \log m \Leftrightarrow m>2\ (\log m)^2,$$
which holds for $m \geq 14$. Therefore, $f(N_0(m))>0$ for all $m \geq 14$ and we conclude the proof by checking numerically that $f(N_0(m))>0$ for $2\leq m \leq 13$.
\end{proof}

With the aid of Lemma \ref{Hou-Zhang}, we obtain the following conclusion.
\begin{thm}\label{wyn-bound}
For any integer $m\geq2$, there exist an integer $N_1(m)$ with
\begin{align*}
N_1(m)=\max\left\{184,\dfrac{9}{4\pi^2} N^2_0(m)\right\},
\end{align*}
such that
\begin{align*}
\left|\wy_n\right|<\left(\frac{3}{2}\right)^{m+1}\wmu(n)^{-m},
\end{align*}
where $\wy_n=\wR(n)/\wT(n)$.
\end{thm}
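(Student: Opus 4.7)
The plan is to bound $\wy_n = \wR(n)/\wT(n)$ directly and then convert the resulting exponential decay into the polynomial bound $(3/2)^{m+1}\wmu(n)^{-m}$ by invoking Lemma \ref{Hou-Zhang}. The structure mirrors the Hou--Zhang argument for $p(n)$; the extra wrinkle is that Engel's remainder term in $\wR(n)$ is of size $e^{\wmu(n)/3}/(n\wmu(n))$, which after dividing by $\wT(n)\sim e^{\wmu(n)}/n$ forces $\wy_n$ to decay like $e^{-2\wmu(n)/3}$ rather than the naive $e^{-2\wmu(n)}$.

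First I would assemble the pointwise estimate. From \eqref{wR(n)} together with \eqref{R_2(n,3)},
\begin{equation*}
|\wR(n)| \le \frac{1}{8n}\Bigl(1+\frac{1}{\wmu(n)}\Bigr)e^{-\wmu(n)} + \frac{3^{5/2}}{2n\wmu(n)}\,e^{\wmu(n)/3},
\end{equation*}
and dividing by $\wT(n)$ as in \eqref{wT(n)} yields
\begin{equation*}
|\wy_n| \le \frac{1+1/\wmu(n)}{1-1/\wmu(n)}\, e^{-2\wmu(n)} + \frac{4\cdot 3^{5/2}}{\wmu(n)(1-1/\wmu(n))}\, e^{-2\wmu(n)/3}.
\end{equation*}
The threshold $n \ge 184$ plays two roles here. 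On the one hand it forces $\wmu(n) \ge \pi\sqrt{184} > 42$, so both of the rational factors $1\pm 1/\wmu(n)$ are uniformly close to $1$ and can be replaced by explicit absolute constants. On the other hand, after pulling out $\frac{1}{\wmu(n)}e^{-2\wmu(n)/3}$, the $e^{-2\wmu(n)}$ summand carries a pre-factor $\wmu(n)e^{-4\wmu(n)/3}$ which is already tiny at $n=184$ and monotonically decreasing. Both summands can thus be combined into a single estimate
\begin{equation*}
|\wy_n| \le \frac{C}{\wmu(n)}\, e^{-2\wmu(n)/3}
\end{equation*}
with an explicit absolute constant $C$, essentially $4\cdot 3^{5/2}$ adjusted by the $42/41$-type factors.

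The second step is the substitution $x := 2\wmu(n)/3$. Since $\wmu(n) = 3x/2$, the target inequality $|\wy_n| < (3/2)^{m+1}\wmu(n)^{-m}$ is equivalent, after clearing the $(3/2)^{m-1}$ factors, to the compact statement $C x^{m-1} e^{-x} < 9/4$. The hypothesis $n \ge \frac{9}{4\pi^2}N_0(m)^2$ is exactly the condition $x \ge N_0(m)$, so Lemma \ref{Hou-Zhang} delivers $x^m e^{-x} < 1$, hence $x^{m-1}e^{-x} < 1/x$. The remaining point is then that the other threshold $n \ge 184$ forces $x \ge 2\pi\sqrt{184}/3 \ge 4C/9$, so $1/x \le 9/(4C)$ and the inequality closes.

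The main obstacle is quantitative: the constant $C$ emerging from the first step and the explicit lower threshold $184$ appearing in the final step are tightly coupled, so one has to track the $1\pm 1/\wmu(n)$ corrections and the absorption of the genuinely smaller $e^{-2\wmu(n)}$ summand with some care to guarantee that $184$ is large enough to clear $4C/9$ for every admissible $m$. Aside from that numerical verification, everything else reduces to the two elementary ingredients already at hand—the Engel error estimate \eqref{R_2(n,3)} and Lemma \ref{Hou-Zhang}.
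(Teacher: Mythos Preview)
Your approach is essentially the same as the paper's: both split $\wR(n)$ into its two pieces, divide by $\wT(n)$, factor out $e^{-2\wmu(n)/3}$, and then apply Lemma~\ref{Hou-Zhang} with $x=\tfrac{2}{3}\wmu(n)$. The one organizational difference is that the paper does not retain the extra $1/\wmu(n)$ factor; it simply shows that the full coefficient of $e^{-2\wmu(n)/3}$, namely $\wT_1(n)+\wR_1(n)=\dfrac{\wmu(n)+1}{\wmu(n)-1}e^{-4\wmu(n)/3}+\dfrac{4\cdot 3^{5/2}}{\wmu(n)-1}$, is already below $3/2$ at $n=184$ and decreasing, so that $|\wy_n|<\tfrac{3}{2}e^{-2\wmu(n)/3}$ and the lemma gives the bound in a single step. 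Your route---keeping $C/\wmu(n)$, invoking $x^{m}e^{-x}<1$ to get $x^{m-1}e^{-x}<1/x$, and then closing via $x\ge 4C/9$---is an equivalent rearrangement of exactly the same tight numerical constraint at $n=184$, just with one extra bookkeeping step.
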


\proof
By \eqref{wT(n)} and \eqref{wR(n)}, we have
\begin{align}\label{wyn-T-R}
\wy_{n}\leq e^{-\frac{2\wmu(n)}{3}}\left(\wT_1(n)+\wR_1(n)\right),
\end{align}
where
\begin{align*}
\wT_1(n)=\frac{\wmu(n)+1}{\wmu(n)-1}e^{-\frac{4\wmu(n)}{3}},
~~~~~~
\wR_1(n)=4\cdot3^{\frac{5}{2}}\frac{1}{\wmu(n)-1}.
\end{align*}
From \eqref{wyn-T-R}, it follows that
\begin{align*}
\wT_1(184)+\wR_1(184)<\frac{3}{2}.
\end{align*}
Therefore,
\begin{align*}
\left|\wy_n\right|&<\frac{3}{2}e^{-\frac{2\wmu(n)}{3}},~~~\text{for}~~n\geq 184.
\end{align*}
According to Lemma \ref{Hou-Zhang}, there exists the integer $N_0(m)$ such that for $\frac{2}{3}\wmu(n)\geq N_0(m)$
\begin{align*}
e^{-\frac{2\wmu(n)}{3}}<\left(\frac{3}{2}\right)^m\wmu(n)^{-m}.
\end{align*}
On the other side,
\begin{align*}
n^{\frac{1}{2}}=\frac{\wmu(n)}{\pi}.
\end{align*}
Therefore, when
\begin{align*}
n\geq\max\left\{184, \dfrac{9}{4\pi^2} N^2_0(m)\right\}=N_1(m),
\end{align*}
we have
\begin{align*}
\wmu(n)>\dfrac{3}{2}N_0(m),
\end{align*}
which completes the proof.
\qed

\subsection{Bounds for the ratio $\p(n+1)/\p(n)$}\label{subsec2}
In order to obtain an estimation of $\p(n+1)/\p(n)$, we need the following lower and upper bounds for $\wT(n+1)/\wT(n)$.
\begin{lem}\label{T-T-bound}
Let $\wnu(n)=\wmu(n)(\wmu_1(n)-1)$ and
\begin{align*}
\wmu_1(n)&=\sum_{k=0}^{m'}{1/2\choose k}\pi^{2k}\wmu(n)^{-2k},\qquad
\wvar_1(n)=\left|{1/2\choose m'+1}\right|\pi^{2(m'+1)}\wmu(n)^{-2(m'+1)},
\\[3pt]
\wmu_2(n)&=\sum_{k=0}^{m'}{-3/2\choose k}\pi^{2k}\wmu(n)^{-2k}, \qquad
\wvar_2(n)=\left|{-3/2\choose m'+1}\right|\pi^{2(m'+1)}\wmu(n)^{-2(m'+1)},
\\[3pt]
\wnu_1(n)&=\left(\wmu_1(n)-\wvar_1(n)-\frac{1}{\wmu(n)}\right)\sum_{k=0}^m\wmu(n)^{-k},
\\[3pt]
\wnu_2(n)&=\left(\wmu_1(n)+\wvar_1(n)-\frac{1}{\wmu(n)}\right)
\left(\sum_{k=0}^m\wmu(n)^{-k}+2\wmu(n)^{-m-1}\right),
\end{align*}
then
\begin{align}\label{T-ratio-low}
\frac{\wT(n+1)}{\wT(n)}&>\wnu_1(n)\left(\wmu_2(n)-\wvar_2(n)\right)
(1-\wmu(n)\wvar_1(n))\sum_{k=0}^m\frac{\wnu(n)^k}{k!},
\end{align}
and
\begin{align}\label{T-ratio-upp}
\frac{\wT(n+1)}{\wT(n)}&<\wnu_2(n)(\wmu_2(n)+\wvar_2(n))
(1+2\wmu(n)\wvar_1(n))\left(\sum_{k=0}^m\frac{\wnu(n)^k}{k!}
+e^{\wnu(n)}\frac{\wnu(n)^{m+1}}{(m+1)!}\right)
\end{align}
where $\wT(n)$ is defined as in \eqref{wT(n)}.
\end{lem}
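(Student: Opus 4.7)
The plan is to decompose $\wT(n+1)/\wT(n)$ into three manageable factors and bound each separately; the expressions $\wnu_1(n)$ and $\wnu_2(n)$ then emerge from a single compatible multiplication. Using the definition \eqref{wT(n)} together with the identity $\wmu(n+1)=\wmu(n)\sqrt{1+1/n}$, I would write
\[
\frac{\wT(n+1)}{\wT(n)} \;=\; \frac{n}{n+1}\cdot\frac{\wmu(n)}{\wmu(n+1)}\;\cdot\;\frac{\wmu(n+1)-1}{\wmu(n)-1}\;\cdot\; e^{\wmu(n+1) - \wmu(n)} \;=:\; A\cdot B\cdot C.
\]

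For the first factor I note that $A=(1+1/n)^{-3/2}$; since $1/n = \pi^2/\wmu(n)^2$, its binomial series is $\sum_{k\ge 0}\binom{-3/2}{k}\pi^{2k}\wmu(n)^{-2k}$, whose coefficients alternate in sign, so Leibniz's criterion (valid once $\wmu(n)>\pi$) gives $\wmu_2(n) - \wvar_2(n)\leq A\leq \wmu_2(n) + \wvar_2(n)$. For the middle factor I would use the rearrangement $B = \bigl(\wmu_1^{\infty}(n)-1/\wmu(n)\bigr)\sum_{k\ge 0}\wmu(n)^{-k}$, where $\wmu_1^\infty(n):=\sqrt{1+1/n}=\sum_{k\ge 0}\binom{1/2}{k}\pi^{2k}\wmu(n)^{-2k}$. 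This series is again alternating (after the leading two terms, with terms decreasing in absolute value once $\wmu(n)>\pi$), so $|\wmu_1^\infty(n)-\wmu_1(n)|\leq \wvar_1(n)$; and the geometric tail satisfies $\sum_{k\ge m+1}\wmu(n)^{-k}\leq 2\wmu(n)^{-m-1}$ once $\wmu(n)\ge 2$. Multiplying the two bracketed estimates (after checking that $\wmu_1(n)-\wvar_1(n)-1/\wmu(n)>0$ in the regime of interest) produces exactly the two-sided bound with $\wnu_1(n)$ on the left and $\wnu_2(n)$ on the right.

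For the exponential factor $C$, since $\wmu(n+1)-\wmu(n)=\wmu(n)(\wmu_1^\infty(n)-1)$, I would split $C = e^{\wnu(n)}\cdot e^{\wmu(n)(\wmu_1^\infty(n)-\wmu_1(n))}$; the second exponent lies in $[-\wmu(n)\wvar_1(n),\,\wmu(n)\wvar_1(n)]$, and the elementary inequalities $e^{-x}\geq 1-x$ and $e^{x}\leq 1+2x$ (valid on $[0,\log 2]$, which contains $\wmu(n)\wvar_1(n)$ for $n$ sufficiently large) deliver the factors $(1-\wmu(n)\wvar_1(n))$ and $(1+2\wmu(n)\wvar_1(n))$. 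The remaining $e^{\wnu(n)}$ I expand by Taylor's theorem: since $\wnu(n)>0$, its Maclaurin partial sum $\sum_{k=0}^m \wnu(n)^k/k!$ is an underestimate, while Lagrange's remainder gives the matching overestimate $\sum_{k=0}^m \wnu(n)^k/k! + e^{\wnu(n)}\wnu(n)^{m+1}/(m+1)!$.

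Combining the three compatible bounds on $A$, $B$, and $C$ and reading off the products yields \eqref{T-ratio-low} and \eqref{T-ratio-upp}. The main obstacle I anticipate is not any single inequality but the bookkeeping of signs when composing them: a product inequality preserves direction only when all intervening factors are nonnegative, so I would verify positivity of $\wmu_1(n)-\wvar_1(n)-1/\wmu(n)$, $\wmu_2(n)-\wvar_2(n)$, and $1-\wmu(n)\wvar_1(n)$ for the range of $n$ implicit in the hypotheses inherited from Theorem \ref{wyn-bound}, and carry those conditions consistently through the proof.
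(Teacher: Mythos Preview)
Your proposal is correct and follows essentially the same route as the paper: the same three-factor decomposition of $\wT(n+1)/\wT(n)$, the same geometric-series bound for $(1-1/\wmu(n))^{-1}$, the same truncation-plus-error estimate for $(1+\pi^2/\wmu(n)^2)^{\pm 1/2}$ and $(1+\pi^2/\wmu(n)^2)^{-3/2}$, and the same split $e^{\wmu(n+1)-\wmu(n)}=e^{\wnu(n)}e^{\wmu(n)(\wmu_1^\infty(n)-\wmu_1(n))}$ handled via $e^{-x}\ge 1-x$, $e^x\le 1+2x$, and the Taylor remainder for $e^{\wnu(n)}$. The only cosmetic differences are that you invoke the alternating-series (Leibniz) bound where the paper phrases the same estimate via the Lagrange remainder, and that you flag explicitly the positivity checks needed to multiply the three two-sided bounds---a point the paper leaves implicit.
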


\proof
By \eqref{wT(n)}, we have
\begin{align}\label{T-ratio}
\frac{\wT(n+1)}{\wT(n)}=\frac{\wmu(n+1)-1}{\wmu(n)-1}\cdot\frac{\wmu(n)^3}{\wmu(n+1)^3}\cdot e^{\wmu(n+1)-\wmu(n)}.
\end{align}
Now we consider the above ratio term by term.
For the first factor, we have
\begin{align*}
\frac{\wmu(n+1)-1}{\wmu(n)-1}=\frac{\frac{\wmu(n+1)}{\wmu(n)}-\frac{1}{\wmu(n)}}{1-\frac{1}{\wmu(n)}}.
\end{align*}
By Taylor's Theorem, we have
\begin{align*}
\left(1-\frac{1}{\wmu(n)}\right)^{-1}
=\sum_{k=0}^\infty \wmu(n)^{-k},
\end{align*}
which implies that
\begin{align}\label{wT-n+1-1-1}
\sum_{k=0}^m\wmu(n)^{-k}<\left(1-\frac{1}{\wmu(n)}\right)^{-1}
<\sum_{k=0}^m\wmu(n)^{-k}+2\wmu(n)^{-m-1}.
\end{align}
Noting that
\begin{align*}
\wmu(n+1)=\wmu(n)\left(1+\frac{\pi^2}{\wmu(n)^2}\right)^{\frac{1}{2}}.
\end{align*}
For any integer $m$, let $m'=\lfloor\frac{m}{2}\rfloor$. Since
\begin{align*}
\left(1+\frac{\pi^2}{\wmu(n)^2}\right)^{\frac{1}{2}}=\sum_{k=0}^{m'}{1/2\choose k}\pi^{2k}\wmu(n)^{-2k}
+{1/2\choose m'+1}\left(\frac{\pi^2}{\wmu(n)^2}\right)^{m'+1}\left(1+\xi\right)^{\frac{1}{2}-m'-1},
\end{align*}
where $0<\xi<\frac{\pi^2}{\wmu(n)^2}$.
We have
\begin{align}\label{wT-n+1-1-2}
\wmu_1(n)-\wvar_1(n)<\frac{\wmu(n+1)}{\wmu(n)}<\wmu_1(n)+\wvar_1(n).
\end{align}
Combining \eqref{wT-n+1-1-1} and \eqref{wT-n+1-1-2}, we deduce that
\begin{align}\label{wT-n+1-1}
\wnu_1(n)<\frac{\wmu(n+1)-1}{\wmu(n)-1}<\wnu_2(n).
\end{align}
For the second factor, we have
\begin{align*}
\frac{\wmu(n)^3}{\wmu(n+1)^3}=\left(1+\frac{\pi^2}{\wmu(n)^2}\right)^{-3/2}.
\end{align*}
Since
\begin{align*}
\left(1+\frac{\pi^2}{\wmu(n)^2}\right)^{-\frac{3}{2}}=\sum_{k=0}^{m'}{-3/2\choose k}\pi^{2k}\wmu(n)^{-2k}
+{-3/2\choose m'+1}\left(\frac{\pi^2}{\wmu(n)^2}\right)^{m'+1}\left(1+\xi\right)^{-\frac{3}{2}-m'-1},
\end{align*}
where $0<\xi<\frac{\pi^2}{\wmu(n)^2}$.
We have
\begin{align}\label{wT-n+1-2}
\wmu_2(n)-\wvar_2(n)<\frac{\wmu(n)^3}{\wmu(n+1)^3}<\wmu_2(n)+\wvar_2(n).
\end{align}
For the last factor, using \eqref{wT-n+1-1-2}, then
\begin{align*}
e^{\wnu(n)-\wmu(n)\wvar_1(n)}<e^{\wmu(n+1)-\wmu(n)}<e^{\wnu(n)+\wmu(n)\wvar_1(n)}.
\end{align*}
Evidently, for $0<x<\frac{1}{2}$,
\[e^{-x}>1-x,\qquad e^x<1+2x\]
and for $x>0$
\[\sum_{k=0}^m\frac{x^k}{k!}<e^x<\sum_{k=0}^m\frac{x^k}{k!}+e^x\frac{x^{m+1}}{(m+1)!},\]
so that
\begin{align}\label{wT-n+1-3-low}
e^{\wmu(n+1)-\wmu(n)}>(1-\wmu(n)\wvar_1(n))\sum_{k=0}^m\frac{\wnu(n)^k}{k!}
\end{align}
and
\begin{align}\label{wT-n+1-3-upp}
e^{\wmu(n+1)-\wmu(n)}<(1+2\wmu(n)\wvar_1(n))\left(\sum_{k=0}^m\frac{\wnu(n)^k}{k!}
+e^{\wnu(n)}\frac{\wnu(n)^{m+1}}{(m+1)!}\right).
\end{align}
Applying the estimates \eqref{wT-n+1-1}, \eqref{wT-n+1-2}, \eqref{wT-n+1-3-low} and \eqref{wT-n+1-3-upp} to \eqref{T-ratio}, we reach \eqref{T-ratio-low} and \eqref{T-ratio-upp}. This completes the proof.
\qed

\begin{thm}\label{theorem}
For any positive integer $m$, there exist integer $N$, real numbers $a_k$ and $C_1, C_2>0$ such that for $n\geq N$
\begin{align}\label{p-p-b}
\sum_{k=0}^ma_k\wmu(n)^{-k}-C_1\wmu(n)^{-m-1}<\frac{\p(n+1)}{\p(n)}<\sum_{k=0}^ma_k\wmu(n)^{-k}+C_2\wmu(n)^{-m-1}.
\end{align}
\end{thm}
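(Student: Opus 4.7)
Write $\p(n)=\wT(n)(1+\wy_n)$, so that
\[
\frac{\p(n+1)}{\p(n)} \;=\; \frac{\wT(n+1)}{\wT(n)}\cdot\frac{1+\wy_{n+1}}{1+\wy_n}.
\]
Using Theorem \ref{wyn-bound} with parameter $m+2$, and the fact that $\wmu(n+1)>\wmu(n)$, for $n\geq N_1(m+2)$ both $|\wy_n|$ and $|\wy_{n+1}|$ are bounded by $(3/2)^{m+3}\wmu(n)^{-m-2}$, hence by $1/2$. Expanding $(1+\wy_n)^{-1}$ as a geometric series then gives
\[
\frac{1+\wy_{n+1}}{1+\wy_n} \;=\; 1+\frac{\wy_{n+1}-\wy_n}{1+\wy_n}\;=\; 1 + O\bigl(\wmu(n)^{-m-2}\bigr),
\]
so this factor contributes only to the targeted $O(\wmu(n)^{-m-1})$ remainder.

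The main work is to extract from Lemma \ref{T-T-bound} an expansion of $\wT(n+1)/\wT(n)$ as a polynomial in $\wmu(n)^{-1}$ of degree $m$ with identical polynomial part in the upper and lower bounds, up to an $O(\wmu(n)^{-m-1})$ error. I would apply Lemma \ref{T-T-bound} with a parameter $M$ chosen large enough compared to $m$ (for instance $M\geq m+1$ suffices). The relevant orders of magnitude are then: $\wvar_1(n),\wvar_2(n)=O(\wmu(n)^{-M-1})$ and $\wmu(n)\wvar_1(n)=O(\wmu(n)^{-M})$, both $o(\wmu(n)^{-m-1})$; $\wmu_1(n)$ and $\wmu_2(n)$ are polynomials of degree $\leq M$ in $\wmu(n)^{-1}$; the truncation $\sum_{k=0}^M\wmu(n)^{-k}$ of $(1-1/\wmu(n))^{-1}$ carries error $O(\wmu(n)^{-M-1})$; and since $\wnu(n)=\wmu(n)(\wmu_1(n)-1)=O(\wmu(n)^{-1})$, the sum $\sum_{k=0}^M\wnu(n)^k/k!$ is itself a polynomial in $\wmu(n)^{-1}$ up to an $O(\wmu(n)^{-M-1})$ tail, and the exponential remainder $e^{\wnu(n)}\wnu(n)^{M+1}/(M+1)!$ is $O(\wmu(n)^{-M-1})$. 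Multiplying the four factors that appear in \eqref{T-ratio-low}, doing the same in \eqref{T-ratio-upp}, truncating each resulting product at order $\wmu(n)^{-m}$, and lumping all higher-order contributions into a single remainder, yields the same polynomial $\sum_{k=0}^m a_k\wmu(n)^{-k}$ from both bounds, since the two bounds differ only in the $\pm\wvar$ corrections and in the exponential tail, all of which live within $O(\wmu(n)^{-m-1})$.

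Combining the two steps and choosing $N=N(m)$ large enough produces positive constants $C_1,C_2$ and \eqref{p-p-b}. The main obstacle is the bookkeeping in the multiplication: one needs uniform bounds (independent of $n$ above some threshold) on $|\wmu_j(n)|$, on $\sum_k|\wnu(n)|^k/k!\leq e$, and on the correction factors $1\pm\wmu(n)\wvar_1(n)$, and then one must verify that every cross term produced by multiplying out the four factors which lies outside the degree-$m$ polynomial core is uniformly $O(\wmu(n)^{-m-1})$. Once this uniform bound is secured, matching coefficients of $\wmu(n)^{-k}$ for $0\leq k\leq m$ in the two expansions forces them to agree, defining the $a_k$, and the residual slack between the two bounds supplies the one-sided constants $C_1$ and $C_2$.
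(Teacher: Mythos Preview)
Your proposal is correct and follows essentially the same route as the paper: split $\p(n+1)/\p(n)$ into the main-term ratio $\wT(n+1)/\wT(n)$ (controlled by Lemma~\ref{T-T-bound}) times a correction factor coming from $\wy_n$ and $\wy_{n+1}$ (controlled by Theorem~\ref{wyn-bound}), and then observe that the upper and lower polynomial bounds from Lemma~\ref{T-T-bound} agree through order~$m$. Your choice to invoke Theorem~\ref{wyn-bound} with parameter $m+2$ and Lemma~\ref{T-T-bound} with $M\geq m+1$ is in fact a little cleaner than the paper, which applies both with parameter~$m$ and leaves the passage from an $O(\wmu(n)^{-m})$ correction in \eqref{p-p-bound} to the claimed $O(\wmu(n)^{-m-1})$ remainder somewhat implicit.
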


\proof
By \eqref{p-T-R} and Theorem \ref{wyn-bound}, for any $m\geq2$ there exists the integer $N_1(m)$ such that
\begin{equation*}
  |\p(n)/\wT(n)-1|<\left(\frac 32\right)^{m+1}\wmu(n)^{-m},~~\forall n\geq N_1(m).
\end{equation*}
We have
\begin{equation*}
  \wT(n)\left(1-\left(\frac 32\right)^{m+1}\wmu(n)^{-m}\right)<\p(n)<\wT(n)\left(1+\left(\frac 32\right)^{m+1}\wmu(n)^{-m}\right).
\end{equation*}
Since $\wmu(n)$ is a increasing function of $n$, we derive that
\begin{equation*}
  \frac {\wT(n+1)}{\wT(n)} \frac {1-\left(\frac 32\right)^{m+1}\wmu(n)^{-m}}{1+\left(\frac 32\right)^{m+1}\wmu(n)^{-m}}<\frac {\p(n+1)}{\p(n)}<\frac {\wT(n+1)}{\wT(n)} \frac {1+\left(\frac 32\right)^{m+1}\wmu(n)^{-m}}{1-\left(\frac 32\right)^{m+1}\wmu(n)^{-m}}.
\end{equation*}
We find that for $0<\lambda<1/3$,
\begin{equation*}
  \frac {1+\lambda}{1-\lambda}<1+3\lambda~~~ and~~~\frac {1-\lambda}{1+\lambda}>1-2\lambda.
\end{equation*}
According to
\[0<\frac {\left(\frac 32\right)^{m+1}}{\wmu(n)^m}<1/3,\]
we have that for all $n\geq N_1(m)$
\begin{equation}\label{p-p-bound}
  \frac {\wT(n+1)}{\wT(n)}\left(1-4\cdot2^m\wmu(n)^{-m}\right)<\frac {\p(n+1)}{\p(n)}<\frac {\wT(n+1)}{\wT(n)}\left(1+6\cdot2^m\wmu(n)^{-m}\right).
\end{equation}
By Lemma \ref{T-T-bound}, we can see that $\wT(n+1)/\wT(n)$ is bounded by a pair of polynomials in $\wmu(n)^{-1}$ whose difference is a polynomials in $\wmu(n)^{-1}$ with degree at least $m+1$. Combining \eqref{p-p-bound} and $\lim_{n\rightarrow+\infty}\wmu(n)=+\infty$, we have \eqref{p-p-b}. This completes the proof.
\qed
For any positive integer $m$, we further show that we can give the explicit numbers of these parameters. In this paper, we follow the Mathematica package of Hou and Zhang \cite{Hou-Zhang-2019}  to compute these parameters.  For example, we compute that for $n>66$
\begin{equation*}
 \sum_{k=0}^4a_k\wmu(n)^{-k}-\frac {160}{\wmu(n)^5}<\frac{\p(n+1)}{\p(n)}<\sum_{k=0}^4a_k\wmu(n)^{-k}+\frac {873}{\wmu(n)^5},
\end{equation*}
where
\begin{equation*}
 \sum_{k=0}^4a_k\wmu(n)^{-k}=1 +\frac {\pi^2}{2\wmu(n)} +\frac {-\pi^2 +\frac {\pi^4}8}{\wmu(n)^2} +\frac {\frac {\pi^2}2 - \frac {5 \pi^4}8 +\frac {\pi^6}{48}}{\wmu(n)^3} +\frac  {\frac {\pi^2}2 +\frac {5 \pi^4}4 -\frac {3 \pi^6}{16} +\frac {\pi^8}{384}}{\wmu(n)^4}.
\end{equation*}

This depends on an algorithm, so we give a specific example to present the calculating process in Section \ref{sec2}.

The following lemma given by Hou and Zhang \cite{Hou-Zhang-2019+} plays an important role in the proof of Theorem \ref{p/n-r-log}.
\begin{lem}\label{sn-l}
Let $\{a_n\}_{n\geq0}$ be a positive sequence such that $\mathscr{R}^2a_n=a_na_{n+2}/a_{n+1}^2$ has the following asymptotic expression
\begin{equation*}
  \mathscr{R}^2a_n=1+\frac c{n^\alpha}+\cdots+o\left(\frac c{n^\beta}\right),~~n\rightarrow \infty,
\end{equation*}
where $0<\alpha\leq\beta$. If $c<0$ and $\alpha <2$, then $\{a_n\}_{n\geq0}$ is asymptotically $[\beta /\alpha]$-log-concave.
\end{lem}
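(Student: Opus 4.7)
The plan is to iterate the operator $\wsR$ a total of $[\beta/\alpha]$ times, propagating the hypothesis on the asymptotic of $\mathscr{R}^{2} a_n$ to the derived sequence at each step. The cornerstone is the algebraic identity
\[
\mathscr{R}^{2}(\wsR a_n) \;=\; \bigl(\mathscr{R}^{2} a_{n+1}\bigr)^{2} \cdot \frac{(1-\mathscr{R}^{2} a_n)(1-\mathscr{R}^{2} a_{n+2})}{(1-\mathscr{R}^{2} a_{n+1})^{2}},
\]
obtained by writing $\wsR a_n = a_{n+1}^{2}\bigl(1-\mathscr{R}^{2} a_n\bigr)$ and computing the resulting ratio. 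The identity makes sense as soon as $1-\mathscr{R}^{2} a_n>0$ eventually, which is already guaranteed by the hypothesis $c<0$, $\alpha<2$.

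For the base case, $\mathscr{R}^{2} a_n = 1 + c/n^\alpha + \cdots + o(c/n^\beta)$ with $c<0$ immediately forces $\mathscr{R}^{2} a_n < 1$ for all sufficiently large $n$, so $\wsR\{a_n\}$ is eventually positive. For the inductive step, I would assume $b_n := \wsR^k a_n$ is positive for $n$ large and satisfies $\mathscr{R}^{2} b_n = 1 + c_k/n^\alpha + \cdots + o(1/n^{\beta_k})$ with $c_k<0$, then apply the identity above with $a$ replaced by $b$. The first factor $(\mathscr{R}^{2} b_{n+1})^{2}$ expands as $1 + 2c_k/n^\alpha + \cdots$, while factoring $-c_k/n^\alpha$ out of the numerator and denominator of the second factor reduces it to $\bigl((n+1)^{2}/(n(n+2))\bigr)^{\alpha}(1+o(1)) = 1 + \alpha/n^{2} + \cdots$. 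The hypothesis $\alpha<2$ ensures that the correction $\alpha/n^{2}$ is strictly dominated by the leading $2c_k/n^\alpha$, so the new leading coefficient is $c_{k+1}=2c_k<0$ and the expansion takes the form $\mathscr{R}^{2}(\wsR b_n) = 1 + 2c_k/n^\alpha + \cdots + o(1/n^{\beta_{k+1}})$ with $\beta_{k+1} \ge \beta_k - \alpha$.

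Iterating, after $k$ applications the leading coefficient has doubled $k$ times (and remains negative) while the ``precision'' in the error decreases by essentially $\alpha$ at each step. As long as $k \le [\beta/\alpha]$ one still has $\beta_k>0$, the leading term $2^{k}c/n^\alpha$ strictly dominates the remainder, and so $\mathscr{R}^{2}(\wsR^{k} a_n) < 1$ eventually; by the identity this yields $\wsR^{k+1} a_n > 0$ for all large $n$. Choosing a common threshold $N$ that works for $k=0,1,\ldots,[\beta/\alpha]-1$ shows that $\wsR\{a_n\}_{n\ge N},\ldots,\wsR^{[\beta/\alpha]}\{a_n\}_{n\ge N}$ are all non-negative, which is asymptotic $[\beta/\alpha]$-log-concavity.

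The main obstacle will be the bookkeeping of the intermediate terms hidden in the ``$\cdots$'' across iterations: one has to verify that the Taylor expansions of $(1 + 1/(n^{2}+2n))^{\alpha}$, of $(\mathscr{R}^{2} b_{n+1})^{2}$, and of the full ratio of three neighbouring values of $1-\mathscr{R}^{2} b$ combine so that (i) the leading negative coefficient exactly doubles at each step, (ii) the $O(1/n^{2})$ correction never overtakes $2^{k}c/n^\alpha$ (this is precisely why $\alpha<2$ is indispensable), and (iii) the remainder $o(1/n^{\beta_k})$ degrades by at most $\alpha$ per iteration. A convenient bookkeeping device is to pass to logarithms via $\Delta^{2}\log a_n = \log \mathscr{R}^{2} a_n$, where the action of $\wsR$ on the coefficients of the series in $n^{-\alpha}$ becomes essentially linear.
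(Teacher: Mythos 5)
The paper itself contains no proof of this lemma: it is imported verbatim from Hou and Zhang \cite{Hou-Zhang-2019+}, so there is no internal argument to compare against. Judged on its own, your proposal is correct and is in substance the argument of that reference. The pivotal identity checks out: writing $\wsR a_n=a_{n+1}^2\bigl(1-\mathscr{R}^2a_n\bigr)$ gives
\begin{equation*}
\mathscr{R}^2(\wsR a)_n=\bigl(\mathscr{R}^2a_{n+1}\bigr)^2\,
\frac{\bigl(1-\mathscr{R}^2a_n\bigr)\bigl(1-\mathscr{R}^2a_{n+2}\bigr)}{\bigl(1-\mathscr{R}^2a_{n+1}\bigr)^2},
\end{equation*}
and your accounting of the induction is sound: factoring $-c_k n^{-\alpha}$ out of the three factors of the second quotient produces the prefactor $\bigl((n+1)^2/(n(n+2))\bigr)^{\alpha}=1+\alpha n^{-2}+O(n^{-3})$, the bracketed relative series contribute only second-difference terms of lower order plus an undifferentiable $o(n^{-(\beta_k-\alpha)})$ remainder, and the first factor contributes $1+2c_kn^{-\alpha}+\cdots$; hence $c_{k+1}=2c_k<0$, $\beta_{k+1}=\beta_k-\alpha$, and $\alpha<2$ is exactly what keeps the $\alpha n^{-2}$ correction from overtaking (or, when $\alpha=2$, contaminating) the leading term.

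One spot to tighten: the condition you need at level $k$ is not ``$\beta_k>0$'' but $\beta_k\ge\alpha$, since the sign conclusion $\mathscr{R}^2(\wsR^k a)_n<1$ requires the remainder $o(n^{-\beta_k})$ to be dominated by $2^kc\,n^{-\alpha}$; if $0<\beta_k<\alpha$ the error could swamp the leading term and no sign can be read off. Fortunately this holds throughout the range you actually use: for $k\le[\beta/\alpha]-1$ one has $\beta_k=\beta-k\alpha\ge\beta-([\beta/\alpha]-1)\alpha\ge\alpha$, because $[\beta/\alpha]\,\alpha\le\beta$. So the count of $[\beta/\alpha]$ nonnegative iterates is exactly right; just state the threshold as $\beta_k\ge\alpha$ and restrict the domination claim to $k\le[\beta/\alpha]-1$ rather than $k\le[\beta/\alpha]$.
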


Now we are in a position to prove Theorem \ref{p/n-r-log}.

{\noindent\it Proof of Theorem \ref{p/n-r-log}.}
Based on \eqref{p-p-b}, we consider the bound of $\wmu(n+1)^{-r}$ and $1/h(\wmu(n)^{-1})$, where $h(\wmu(n)^{-1})$ is a polynomial in $\wmu(n)^{-1}$ with constant term $1$.

Let $c$ denote the tail term of $h(\wmu(n)^{-1})-1$. If $c>0$, there exists a positive integer $N$ such that
\begin{equation*}
  h(\wmu(n)^{-1})-1>0,~~\forall n\geq N.
\end{equation*}
By Taylor's Theorem, we have
\begin{align*}
  1-\lambda+\lambda^2-\cdots +(-1)^m\lambda^m-\lambda^{m+1}<\frac 1{h(\wmu(n)^{-1})}&=\frac 1{1+\left(h(\wmu(n)^{-1})-1\right)}\\[3pt]
  &<1-\lambda+\lambda^2-\cdots  +(-1)^m\lambda^m+\lambda^{m+1},
\end{align*}
where $\lambda=h(\wmu(n)^{-1})-1$. If $c<0$, there exists a positive integer $N$ such that
\begin{equation*}
  0<1-h(\wmu(n)^{-1})<\frac 12,~~\forall n\geq N.
\end{equation*}
So
\begin{align*}
  1+\lambda+\lambda^2+\cdots +\lambda^m+\lambda^{m+1}<\frac 1{h(\wmu(n)^{-1})}&=\frac 1{1-\left(1-h(\wmu(n)^{-1})\right)}\\[3pt]
  &<1+\lambda+\lambda^2+\cdots+\lambda^m+2\lambda^{m+1},
\end{align*}
where $\lambda=1-h(\wmu(n)^{-1})$.

We now consider the bound of $\wmu(n+1)^{-r}$. It is easy to get that
\begin{equation*}
\left(\frac {\wmu(n+1)}{\wmu(n)}\right)^{-r}=\left(1+\frac {\pi^2}{\wmu(n)^2}\right)^{-r/2}.
\end{equation*}
Then we can derive the bounds of $\left(\frac {\wmu(n+1)}{\wmu(n)}\right)^{-r}$ in a way similar to the estimation of $\frac {\wmu(n+1)}{\wmu(n)}$, thus get an estimation of $\wmu(n+1)^{-r}$.

Based on the above estimations, we compute the asymptotic expression of $\u_n:=\dfrac{\p(n-1)\p(n+1)}{\p(n)^2}$ by Mathematica, for any positive integer $m$,
\begin{equation*}
 \u_n=1-\frac {\pi}{4n^{3/2}}+\cdots+o\left(\frac 1{n^m}\right).
\end{equation*}
By Lemma \ref{sn-l}, we complete the proof.
\qed

\section{Proof of Theorems \ref{thm1}, \ref{thm2}, \ref{thm3} and Corollary \ref{cor1}-\ref{cor6}}\label{sec2}
In Section \ref{sec1}, we prove the asymptotic $r$-$\log$-concavity for the overpartition function. In this section, we study the $2$-$\log$-concavity as an example, stated in Corollary \ref{cor3}.
It's worth noting that we can derive $3$-$\log$-concavity (or others) in the same way.
But it could be more difficult with $r$ becoming larger. We also prove the conclusions what Theorem \ref{p/n-r-log} brings.

The key idea behind the proof of Theorem \ref{thm1} is lying in a detail analysis of the Theorem \ref{theorem} by exploiting the Equations \eqref{T-ratio} and \eqref{p-p-bound}.
More specifically, we shall proceed for a detail inquiry of the exact asymptotics for each of the factor present in $\wT(n+1)/\wT(n)$ explicitly by studying the Taylor expansion of the form $\sum_{m \geq 0}a_m(\sqrt{n})^{-m}$ upto order $7$ and bounding the error term.
This will set up a stage for the proof of Theorem \ref{thm2} and \ref{thm3}.

{\noindent\it Proof of Theorem \ref{thm1}.}
We recall the Equation \eqref{T-ratio}:
\begin{equation*}
\frac{\wT(n+1)}{\wT(n)}=\frac{\wmu(n+1)-1}{\wmu(n)-1}\cdot\frac{\wmu(n)^3}{\wmu(n+1)^3}\cdot e^{\wmu(n+1)-\wmu(n)}.
\end{equation*}
By Taylor's theorem, we get
\begin{equation*}
\frac{\wmu(n+1)-1}{\wmu(n)-1}=s^{(1)}_+(n)+O\Bigl(\dfrac{1}{n^4}\Bigr),
\end{equation*}
where
\begin{equation*}
\begin{split}
s^{(1)}_+(n)= 1+\dfrac{1}{2n}+\dfrac{1}{2\pi n^{3/2}}+\Bigl(\dfrac{1}{2\pi^2}-\dfrac{1}{8}\Bigr)\dfrac{1}{n^2}+\Bigl(\dfrac{1}{2\pi^3}-\dfrac{1}{8\pi}\Bigr)\dfrac{1}{n^{5/2}}&+\Bigl(\dfrac{1}{16}+\dfrac{1}{2\pi^4}-\dfrac{1}{8\pi^2}\Bigr)\dfrac{1}{n^3}\\
&+ \Bigl(\dfrac{1}{2\pi^5}-\dfrac{1}{8\pi^3}+\dfrac{1}{16\pi}\Bigr)\dfrac{1}{n^{7/2}}.
\end{split}
\end{equation*}
It is easy to observe that for $n \geq 1$,
\begin{equation}\label{thm1eqn1}
s^{(1)}_+(n)-\dfrac{2}{n^4}<\frac{\wmu(n+1)-1}{\wmu(n)-1}<s^{(1)}_+(n)+\dfrac{2}{n^4}.
\end{equation}
Similarly, for $n \geq 1$, we obtain
\begin{equation}\label{thm1eqn2}
s^{(2)}_+(n)-\dfrac{3}{n^4}<\frac{\wmu(n)^3}{\wmu(n+1)^3}<s^{(2)}_+(n)+\dfrac{3}{n^4},
\end{equation}
where
$$s^{(2)}_+(n)=1-\dfrac{3}{2n}+\dfrac{15}{8n^2}-\dfrac{35}{16n^3}.$$
For the factor $e^{\wmu(n+1)-\wmu(n)}$, we first estimate $\wmu(n+1)-\wmu(n)$ as follows; for $n \geq 1$,
\begin{equation}\label{thm1eqn3}
s^{(3,0)}_+(n)<\wmu(n+1)-\wmu(n) <s^{(3,0)}_+(n)+\dfrac{3}{n^4},
\end{equation}
where $$s^{(3,0)}_+(n)=\dfrac{\pi}{2\sqrt{n}}-\dfrac{\pi}{8n^{3/2}}+\dfrac{\pi}{16n^{5/2}}-\dfrac{5\pi}{128n^{7/2}}.$$
Now, expanding $e^{s^{(3,0)}_+(n)}$ and truncate the infinite series at the order $\dfrac{1}{n^{7/2}}$ and bound the error term that states for $n \geq 2$,
\begin{equation}\label{thm1eqn4}
\ e^{3/n^4}<1+\dfrac{4}{n^4}\ \ \text{and}\ \ s^{(3)}_+(n)-\dfrac{1}{n^4}<e^{s^{(3,0)}_+(n)} <s^{(3)}_+(n)+\dfrac{1}{n^4},
\end{equation}
where
$$s^{(3)}_+(n)=\sum_{m=0}^{7}s^{(3)}_{+,m}\Bigl(\dfrac{1}{\sqrt{n}}\Bigr)^m$$
with
\begin{align*}
&s^{(3)}_{+,0}=1, \quad
s^{(3)}_{+,1}=\dfrac{\pi}{2},  \quad
s^{(3)}_{+,2}=\dfrac{\pi^2}{8},  \quad
s^{(3)}_{+,3}=\dfrac{\pi (\pi^2-6)}{48},
\quad
s^{(3)}_{+,4}=\dfrac{\pi^2 (\pi^2-24)}{384},
\\[3pt]
&s^{(3)}_{+,5}=\dfrac{\pi (\pi^4-60\pi^2+240)}{3840},
\quad
s^{(3)}_{+,6}=\dfrac{\pi^2 (\pi^4-120\pi^2+1800)}{46080},
\\[3pt]
&s^{(3)}_{+,7}=\dfrac{\pi (\pi^6-210\pi^4+7560\pi^2-25200)}{645120}.
\end{align*}
From \eqref{thm1eqn3} and \eqref{thm1eqn4}, we get
\begin{equation}\label{thm1eqn5}
\Bigl(s^{(3)}_+(n)-\dfrac{1}{n^4}\Bigr)<e^{\wmu(n+1)-\wmu(n)}<\Bigl(s^{(3)}_+(n)+\dfrac{1}{n^4}\Bigr)\Bigl(1+\dfrac{4}{n^4}\Bigr).
\end{equation}
It can be easily checked that for $n \geq 1$,
\begin{equation}\label{thm1eqn6}
1-\dfrac{4\cdot2^8} {\wmu(n)^{8}}>1-\dfrac{1}{n^4}\ \ \text{and}\ \ 1+\dfrac{6\cdot2^8} {\wmu(n)^{8}}<1+\dfrac{1}{n^4}.
\end{equation}
Hence, by \eqref{thm1eqn1}, \eqref{thm1eqn2}, \eqref{thm1eqn5}, \eqref{thm1eqn6} and using \eqref{p-p-bound} with $m=8$, we obtain for all $n \geq 184$,
\begin{equation}\label{thm1eqn7}
L_{+}(n)<\dfrac{\p(n+1)}{\p(n)}<U_{+}(n),
\end{equation}
where
\begin{align*}
U_{+}(n)&= \Bigl(s^{(1)}_+(n)+\dfrac{2}{n^4}\Bigr)\Bigl(s^{(2)}_+(n)
+\dfrac{3}{n^4}\Bigr)\Bigl(s^{(3)}_+(n)+\dfrac{1}{n^4}\Bigr)
\Bigl(1+\dfrac{4}{n^4}\Bigr)\Bigl(1+\dfrac{1}{n^4}\Bigr),
\\[3pt]
L_{+}(n)&=\Bigl(s^{(1)}_+(n)-\dfrac{2}{n^4}\Bigr)
\Bigl(s^{(2)}_+(n)-\dfrac{3}{n^4}\Bigr)\Bigl(s^{(3)}_+(n)
-\dfrac{1}{n^4}\Bigr)\Bigl(1-\dfrac{1}{n^4}\Bigr).
\end{align*}
In the similar way stated before, we obtain for $n \geq 184$,
\begin{equation}\label{thm1eqn8}
L_{-}(n)<\dfrac{\p(n-1)}{\p(n)}<U_{-}(n),
\end{equation}
where
\begin{align*}
U_{-}(n)&= \Bigl(s^{(1)}_-(n)+\dfrac{1}{n^4}\Bigr)\Bigl(s^{(2)}_-(n)+\dfrac{3}{n^4}\Bigr)
\Bigl(s^{(3)}_-(n)+\dfrac{1}{n^4}\Bigr)\Bigl(1+\dfrac{1}{n^4}\Bigr),\\ L_{-}(n)&=\Bigl(s^{(1)}_-(n)-\dfrac{1}{n^4}\Bigr)\Bigl(s^{(2)}_-(n)\Bigr)\Bigl(s^{(3)}_-(n)\Bigr)\Bigl(1-\dfrac{1}{n^4}\Bigr)^2,
\end{align*}
with
\begin{align*}
s^{(1)}_-(n)&=1-\dfrac{1}{2n}-\dfrac{1}{2\pi n^{3/2}}-\Bigl(\dfrac{1}{2\pi^2}+\dfrac{1}{8}\Bigr)\dfrac{1}{n^2}
-\Bigl(\dfrac{1}{2\pi^3}+\dfrac{1}{8\pi}\Bigr)\dfrac{1}{n^{5/2}}
\\[3pt]
&\quad-\Bigl(\dfrac{1}{16}+\dfrac{1}{2\pi^4}+\dfrac{1}{8\pi^2}\Bigr)\dfrac{1}{n^3}
-\Bigl(\dfrac{1}{2\pi^5}+\dfrac{1}{8\pi^3}+\dfrac{1}{16\pi}\Bigr)\dfrac{1}{n^{7/2}},
\\[3pt]
s^{(2)}_-(n)&=1+\dfrac{3}{2n}+\dfrac{15}{8n^2}+\dfrac{35}{16n^3},
\\[3pt]
s^{(3)}_-(n)&=\sum_{m=0}^{7}s^{(3)}_{-,m}\Bigl(\dfrac{1}{\sqrt{n}}\Bigr)^m,
\end{align*}
along with
\begin{align*}
&s^{(3)}_{-,0}=1,\quad
s^{(3)}_{-,1}=-\dfrac{\pi}{2},\quad
s^{(3)}_{-,2}=\dfrac{\pi^2}{8},\quad
s^{(3)}_{-,3}=-\dfrac{\pi (\pi^2+6)}{48},\quad
s^{(3)}_{-,4}=\dfrac{\pi^2 (\pi^2+24)}{384},
\\[3pt]
&s^{(3)}_{-,5}=-\dfrac{\pi (\pi^4+60\pi^2+240)}{3840}, \quad
s^{(3)}_{-,6}=\dfrac{\pi^2 (\pi^4+120\pi^2+1800)}{46080},
\\[3pt]
&s^{(3)}_{-,7}=-\dfrac{\pi (\pi^6+210\pi^4+7560\pi^2+25200)}{645120}.
\end{align*}
Now by \eqref{thm1eqn7} and \eqref{thm1eqn8}, it follows that for $n \geq 184$,
\begin{equation}\label{thm1eqn9}
L_{+}(n)\cdot L_{-}(n)<\u(n)<U_{+}(n)\cdot U_{-}(n).
\end{equation}
It can be readily checked that for $n \geq 2$,
\begin{equation*}
U_{+}(n)\cdot U_{-}(n) <s(n)+\dfrac{20}{n^4}\ \ \text{and}\ \ L_{+}(n)\cdot L_{-}(n) >s(n)-\dfrac{15}{n^4}.
\end{equation*}
This finishes the proof of \eqref{eqn1} for $n \geq 184$. For the rest $37 \leq n \leq 183$, one can check numerically in Mathematica.
\qed

{\noindent\it Proof of Corollary \ref{cor1}.}
It is easy to check that $s_n+\frac{20}{n^4}<1$ for $n \geq 5$ and therefore from \eqref{eqn1}, can conclude that $\u_n<1$ for $n \geq 37$ which is equivalent to say that $\{\p(n)\}_{n \geq 37}$ is $\log$-concave. For $4 \leq n \leq 37$, we did numerical checking in Mathematica.
\qed

{\noindent\it Proof of Corollary \ref{cor2}.}
Note that for $n \geq 5$
\[\Bigl(s_n-\frac{15}{n^4}\Bigr)\Bigl(1+\dfrac{\pi}{4n^{3/2}}\Bigr)>1\]
and therefore from \eqref{eqn1}, it follows that for $n \geq 37$ \[\u_n\Bigl(1+\dfrac{\pi}{4n^{3/2}}\Bigr)>1\]
which is equivalent to \eqref{eqn2}. For $2 \leq n \leq 37$, we did numerical checking in Mathematica.
\qed

Define
\begin{equation}\label{def}
U_n:= s_n+\dfrac{20}{n^4}\ \ \text{and}\ \ L_n:=s_n-\dfrac{15}{n^4}.
\end{equation}

{\noindent\it Proof of Theorem \ref{thm2}.} Using \eqref{eqn1} from Theorem \ref{thm1}, it follows that for $n \geq 37$,
\begin{equation}\label{thm2eqn1}
\dfrac{(1-U_n)^2}{U^2_n(1-L_{n-1})(1-L_{n+1})}<\dfrac{(1-\u_n)^2}{\u^2_n(1-\u_{n-1})(1-\u_{n+1})} < \dfrac{(1-L_n)^2}{L^2_n(1-U_{n-1})(1-U_{n+1})}.
\end{equation}
Moreover, it can be readily checked that for $n \geq 29$,
\begin{equation*}
\begin{split}
\dfrac{(1-L_n)^2}{L^2_n(1-U_{n-1})(1-U_{n+1})} &< t_n+\dfrac{120}{n^{5/2}},
\\[3pt]
\dfrac{(1-U_n)^2}{U^2_n(1-L_{n-1})(1-L_{n+1})}&> t_n-\dfrac{120}{n^{5/2}}.
\end{split}
\end{equation*}
We conclude the proof of \eqref{eqn3} by checking numerically for $27 \leq n \leq 36$ in Mathematica.
\qed

{\noindent\it Proof of Corollary \ref{cor3}.}
It is equivalent to show that for $n \geq 42$ \[\dfrac{(1-\u_n)^2}{\u^2_n(1-\u_{n-1})(1-\u_{n+1})}>1.\]
From the fact that for all $n \geq 99$,
\[t_n-\dfrac{120}{n^{5/2}}>1\]
and by \eqref{eqn3}, the proof is finished after the numerical verification for $42 \leq n \leq 98$ in Mathematica.
\qed

{\noindent\it Proof of Corollary \ref{cor4}.}
Since for all $n \geq 1176$,
\[t_n+\dfrac{120}{n^{5/2}}<1+\dfrac{\pi}{2n^{3/2}}.\]
By \eqref{eqn3}, we conclude the proof. For the rest, we can check numerically \eqref{eqn4} for $52 \leq n \leq 1175$ in Mathematica.
\qed

{\noindent\it Proof of Theorem \ref{thm3}.} Following the definition given before and by \eqref{eqn1} of Theorem \ref{thm1}, it follows that for $n \geq 37$,
\begin{equation}\label{thm3eqn1}
\dfrac{4(1-U_n)(1-U_{n+1})}{(1-L_{n}L_{n+1})^2}< \dfrac{4(1-\u_n)(1-\u_{n+1})}{(1-\u_n\u_{n+1})^2}< \dfrac{4(1-L_n)(1-L_{n+1})}{(1-U_{n}U_{n+1})^2}.
\end{equation}
It is easy to observe that for $n \geq 99$,
\begin{equation*}
\begin{split}
\dfrac{4(1-L_n)(1-L_{n+1})}{(1-U_{n}U_{n+1})^2} &< v_n+\dfrac{101}{n^{5/2}},
\\[3pt]
\dfrac{4(1-U_n)(1-U_{n+1})}{(1-L_{n}L_{n+1})^2}&> v_n-\dfrac{120}{n^{5/2}}.
\end{split}
\end{equation*}
We conclude the proof of \eqref{eqn5} by checking numerically for $2 \leq n \leq 98$ in Mathematica.
\qed

{\noindent\it Proof of Corollary \ref{cor5}.}
We observe that $v_n-\dfrac{120}{n^{5/2}}>1$ for all $n \geq 180$ and hence by \eqref{eqn5}, it follows immediately that $\{\p(n)\}_{n \geq 180}$ satisfies higher order Tur\'{a}n inequality and for $16 \leq n \leq 179$, we verified numerically in Mathematica.
\qed

{\noindent\it Proof of Corollary \ref{cor6}.}
	It is straightforward to check that $v_n+\dfrac{101}{n^{5/2}}< 1+\dfrac{\pi}{4n^{3/2}}$ for all $n \geq 4179$ in \eqref{eqn5}. To finish the proof of \eqref{eqn6}, it remains to verify for $2 \leq n \leq 4178$, which was done by numerical verification in Mathematica.
\qed

\vspace{0.5cm}
 \baselineskip 15pt
{\noindent\bf\large{\ Acknowledgements}} \vspace{7pt} \par
The first author would like to acknowledge that the research was funded by the Austrian Science Fund (FWF): W1214-N15, project DK13.
The second author would like to acknowledge that the research was supported by the National Natural Science Foundation of China (Grant Nos. 12001182 and 12171487),  the Fundamental Research Funds for the Central Universities (Grant No. 531118010411) and Hunan Provincial Natural Science Foundation of China (Grant No. 2021JJ40037).


\begin{thebibliography}{99}
\setlength{\itemsep}{-.8mm}
\addcontentsline{toc}{section}{References}

\bibitem{Andrews-1998}
G.E. Andrews, The Theory of Partitions, Cambridge University Press, Cambridge. (1998).

\bibitem{Chentalk-2010}
W.Y.C.~Chen, Recent developments on $\log$-concavity and $q$-$\log$-concavity of combinatorial polynomials, 22nd International Conference on Formal Power Series and Algebraic Combinatorics (2010).

\bibitem{ChenJiaWang-2019}
W.Y.C.~Chen, D.X.Q.~Jia and L.X.W.~Wang, Higher order Tur\'{a}n inequalities for the partition function,
 Trans. Amer. Math. Soc. 372 (3) (2019) 2143--2165.

\bibitem{ChenWangXie-2016}
W.Y.C.~Chen, L.X.W.~Wang and G.Y.B.~Xie, Finite difference of the logarithm of the partition function,
Math. Comp. 85 (298) (2016) 825--847.

\bibitem{Corteel-Lovejoy-2004}
S.~Corteel and J.~Lovejoy, Overpartitions, Trans. Amer. Math. Soc. 356 (2004) 1623--1635.

\bibitem{DeSalvo-Pak-2015}
S.~DeSalvo and I.~Pak, Log-concavity of the partition function, Ramanujan J. 38 (1) (2015) 61--73.

\bibitem{Engel-2017}
B.~Engel, Log-concavity of the overpartition function, Ramanujan J. 43 (2) (2017) 229--241.

\bibitem{Hardy-1940}
G.H.~Hardy, Twelve Lectures on Subjects Suggested by His Life and Work, Cambridge University Press, Cambridge. (1940).

\bibitem{Hardy-Ramanujan-1918}
G.H.~Hardy and S.~Ramanujan, Asymptotic formulae in combinatory analysis, Proc. Lond. Math. Soc. 17 (1918) 75--175.

\bibitem{Hou-Zhang-2019}
Q.-H.~Hou and Z.R.~Zhang, $r$-log-concavity of partition functions, Ramanujan J. 48 (1) (2019) 117--129.

\bibitem{Hou-Zhang-2019+}
Q.-H.~Hou and Z.R.~Zhang, Asymptotic $r$-log-convexity and P-recursive sequences, J. Symbolic Comput. 93 (2019) 21--33.

\bibitem{JiaWang-2020}
D.X.Q.~Jia and L.X.W.~Wang, Determinantal inequalities for the partition function, Proceedings of the Royal Society of Edinburgh Section A: Mathematics 150 (3) (2020) 1451--1466.

\bibitem{Lehmer-1938}
D.H.~Lehmer, On the series for the partition function, Trans. Am. Math. Soc. 43 (1938) 271--292.

\bibitem{Lehmer-1939}
D.H.~Lehmer, On the remainders and convergence of the series for the partition function, Trans. Am. Math. Soc. 46 (1939) 362--373.

\bibitem{LarsonWagner-2019}
H.~Larson and I.~Wagner, Hyperbolicity of the partition Jensen polynomials, Res. Number Theory 5 (2) 19 (2019) Paper No. 19, 12 pp.

\bibitem{LiuZhang-2021}
E.Y.S.~Liu and H.W.J.~Zhang, Inequalities for the overpartition function, Ramanujan J. 54 (3) (2021) 485--509.

\bibitem{Mukherjee-2022}
G.~Mukherjee, Inequalities for the overpartition function arising from determinants, arXiv:2201.07840.


\bibitem{Rademacher-1938}
H.~Rademacher, On the Partition Function $p(n)$, Proc. London Math. Soc. 2 (1) (1938) 241--254.


\bibitem{Sills-2010}
A.V.~Sills, A Rademacher type formula for partitions and overpartitions, Int. J. Math. Math. Sci. 2010 (2010) 21.

\bibitem{Wang-Xie-Zhang-2018}
L.X.W.~Wang, G.Y.B.~Xie and A.Q.~Zhang, Finite difference of the overpartition function, Adv. Appl. Math. 92 (2018) 51--72.

\bibitem{Zuckerman-1939}
H.S.~Zuckerman, On the coefficients of certain modular forms belonging to subgroups of the modular group, Trans. Amer. Math. Soc. 45 (2) (1939) 298--321.














\end{thebibliography}
\end{document}